\newtheorem{theorem}{Theorem}[section]
\newtheorem{remark}[theorem]{Remark}
\newtheorem{lemma}[theorem]{Lemma}
\newtheorem{proposition}[theorem]{Proposition}
\numberwithin{equation}{section}
\newcommand{\al}{\alpha}
\newcommand{\de}{\delta}
\newcommand{\eps}{\varepsilon}
\newcommand{\R}{{\mathbb R}}
\def\sideremark#1{\ifvmode\leavevmode\fi\vadjust{\vbox to0pt{\vss
\hbox to 0pt{\hskip\hsize\hskip1em
\vbox{\hsize2.1cm\tiny\raggedright\pretolerance10000
\noindent #1\hfill}\hss}\vbox to15pt{\vfil}\vss}}}%
\def\d{\,\mathrm{d}}
\def\e{\mathrm{e}}
\def\r{\mathbb{R}}
\def\rn{\mathbb{R}^N}
\def\eps{\varepsilon}
\def\io{\int_{\Omega}}
\def\irn{\int_{\r^N}}
\author{Mónica Clapp\footnote{M. Clapp was partially supported by UNAM-DGAPA-PAPIIT grant IN100718 and CONACYT grant A1-S-10457 (Mexico).}, Rosa Pardo\footnote{R. Pardo was partially supported by Unión Iberoamericana de Universidades (UIU) ref. UCM-04-2019, by  grants MTM2016-75465-P, and PID2019-103860GB-I00,  MICINN,  Spain, and by UCM-BSCH, Spain, GR58/08, Grupo 920894.}, Angela Pistoia
\footnote{A. Pistoia was partially
supported by Fondi di Ateneo “Sapienza” Universit`a di Roma (Italy) and project Vain-Hopes within the
program VALERE: VAnviteLli pEr la RicErca.}, and Alberto Saldaña\footnote{A. Saldaña was partially supported by the Alexander von Humboldt Foundation (Germany).}}
\title{A solution to a slightly subcritical elliptic problem with non-power nonlinearity}
\date{\today}
\begin{document}

\maketitle

\begin{abstract}
We consider a slightly subcritical Dirichlet problem with a non-power nonlinearity in a bounded smooth domain.  For this problem, standard compact embeddings cannot be used to guarantee the existence of solutions as in the case of power-type nonlinearities.  Instead, we use a Ljapunov-Schmidt reduction method to show that there is a positive solution which concentrates at a non-degenerate critical point of the Robin function. This is the first existence result for this type of generalized slightly subcritical problems.

\medskip

\noindent\textbf{Keywords:} blow-up solutions, critical Sobolev exponent, Ljapunov-Schmidt reduction
\medskip

\noindent\textbf{MSC2020:} 35B44, 35B33, 35J60.
\end{abstract}

\section{Introduction}

We consider the problem
\begin{equation}\label{Plog}
\begin{cases}
-\Delta u =f_\eps(u)&\text{ in }\Omega,\\
u=0&\text{ on }\partial\Omega,
\end{cases}
\end{equation}
where $\Omega$ is a bounded smooth domain in $\R^N$, $N\geq 3$, and
\begin{align}\label{flog}
f_\eps(u):=\frac{|u|^{2^*-2}u}{[\ln(\e+|u|)]^\eps},\quad \eps\geq 0.
\end{align}
Here, $2^*:=\frac{2N}{N-2}$ is the critical Sobolev exponent.

If $\eps=0$, then \eqref{Plog} is called the pure critical problem. In this case, the existence of solutions is strongly affected by the geometry of the domain.
Indeed, Pohozaev's identity~\cite{po} ensures the non-existence of solutions in   star-shaped domains, while the existence of a positive solution was established by Bahri and Coron ~\cite{bc}  in a domain with non-trivial topology.

Most of the analysis of slightly subcritical problems has been focused on power-type nonlinearities ($|u|^{2^*-2-\eps}u$ instead of $f_\eps(u)$).  However, if one considers problems with other types of subcritical behavior (such as \eqref{Plog} with \eqref{flog}), then many of the techniques developed for the power nonlinearity cannot be applied anymore.  For example, one cannot use directly the compactness of Sobolev embeddings to guarantee the convergence of Palais-Smale sequences associated to \eqref{Plog}. Another well-known approach to find solutions of elliptic problems is to find a uniform a priori bound and establish an existence result using Leray-Schauder degree theory. However, if $\eps>0$, then the existence of a uniform a priori bound for the $L^\infty$-norm of all positive solutions to the problem~\eqref{Plog} is, in general, not known. The classical results of Gidas and Spruck~\cite{gs} and de Figueiredo, Lions, and Nussbaum~\cite{dln} do not apply to this problem.  In this direction, some progress has been made recently. In~\cite{cp}, Castro and Pardo obtained a priori bounds for nonlinearities including \eqref{flog} with $\eps>\frac{2}{N-2}$, which are not covered by~\cite{gs, dln}. The arguments rely on the moving plane method (providing uniform a priori bounds in a neighborhood of the boundary), the Pohozaev identity, $W^{1,q}$  regularity  for $q>N$, and  Morrey's theorem.  Using the Kelvin transform, they  extend the existence of uniform a priori bounds to non-convex domains, see~\cite{cp,cp-rl}.  These results are, however, only available for $\eps>\frac{2}{N-2}$, and do not include slightly subcritical problems. 

We believe that the study of problems such as \eqref{Plog} with \eqref{flog}, for $\eps>0$ small, improves our understanding of more general subcritical problems and helps to develop more flexible and powerful tools in nonlinear analysis.

\medskip

In this paper, we establish the existence of a solution to~\eqref{Plog} which blows-up at a point in $\Omega$ when $\eps\to0$.  First, let us introduce the so-called standard bubbles
\begin{equation}\label{bub:intro}
U(y):=\alpha_N   \frac{1}{(1+|y|^2)^{\frac{N-2}{2}}},\qquad
U_{\delta,\xi}(x)=\delta^{-\frac{N-2}2}U\left(\frac{x-\xi}{\delta}\right),\quad  \delta>0,\;\xi\in\rn,
\end{equation}
where $\alpha_N=[N(N-2)]^\frac{N-2}{4}$.  Next,  let $G$ be the Green function of $-\Delta$ in $\Omega$ with Dirichlet boundary condition, and let $H$ be its regular part, \textit{i.e.},
$$G(x,y)={c_N}\left(\frac1{|x-y|^{N-2}}-H(x,y)\right),\qquad x,y\in \Omega,$$
where $c_N=\frac1{(N-2)\omega_{N}}$ and $\omega_N$ denotes the surface area of the unit sphere in $\mathbb R^N.$ The function $\varrho :\Omega\to\r$ given by 
$$\varrho (x):=H(x,x)$$
is called the Robin function.  Our main result is the following one.

\begin{theorem}\label{main:thm}
Let $\xi^*\in \Omega$ be a non-degenerate critical point of the Robin function.
Then, there exists a solution to \eqref{Plog} which blows up at $\xi^*$ as $\eps\to0.$ More precisely, there exists $\eps_0>0$ such that, for each $\eps\in(0,\eps_0)$, there is a solution $u_\eps\in H^1_0(\Omega)$ of~\eqref{Plog} of the form
\begin{align} \label{uepsbd}
u_\eps=U_{\delta(\eps),\xi(\eps)}+\Phi_\eps,\quad \hbox{ where }\quad \int_\Omega|\nabla \Phi_\eps |^2=\mathcal O\left(\frac{ {\eps}}{{|\ln\eps|}}\right)
\end{align}
and with $U_{\delta(\eps),\xi(\eps)}$ as in \eqref{bub:intro}. The concentration parameter $\delta(\eps)$ and the blow-up point $\xi(\eps)$ satisfy
$$
\delta(\eps) \left(\frac{|\ln\eps|}{\eps}\right)^{\frac{1}{N-2}}\to d>0
\quad \hbox{and}\quad
\xi(\eps) \to\xi^*\quad \hbox{as}\ \eps\to 0.$$
\end{theorem}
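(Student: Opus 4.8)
The plan is to carry out a Ljapunov--Schmidt finite-dimensional reduction. Write $\|u\|:=(\io|\nabla u|^2)^{1/2}$, and let $i^*\colon L^{\frac{2N}{N+2}}(\Omega)\to H^1_0(\Omega)$ be the adjoint of the Sobolev embedding; since $|f_\eps(u)|\le|u|^{2^*-1}$, problem \eqref{Plog} is equivalent to $u=i^*(f_\eps(u))$. As $-\Delta U_{\delta,\xi}=U_{\delta,\xi}^{2^*-1}$ in $\rn$, the projection $PU_{\delta,\xi}:=i^*(U_{\delta,\xi}^{2^*-1})\in H^1_0(\Omega)$ obeys the classical expansion $PU_{\delta,\xi}=U_{\delta,\xi}-\alpha_N\delta^{\frac{N-2}{2}}H(\cdot,\xi)+o(\delta^{\frac{N-2}{2}})$, and $\varphi_{\delta,\xi}:=U_{\delta,\xi}-PU_{\delta,\xi}$ is harmonic in $\Omega$ with $\|\varphi_{\delta,\xi}\|^2=\cO(\delta^{N-2})$. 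Let $Z^0_{\delta,\xi}:=\pa_\delta U_{\delta,\xi}$ and $Z^j_{\delta,\xi}:=\pa_{\xi_j}U_{\delta,\xi}$ ($j=1,\dots,N$), which span the kernel of $-\Delta-(2^*-1)U_{\delta,\xi}^{2^*-2}$ in $\cD^{1,2}(\rn)$; put $K_{\delta,\xi}:=\operatorname{span}\{PZ^0_{\delta,\xi},\dots,PZ^N_{\delta,\xi}\}$ and let $K_{\delta,\xi}^\perp$ be its orthogonal complement in $H^1_0(\Omega)$. I would look for $u=PU_{\delta,\xi}+\phi$ with $\phi\in K_{\delta,\xi}^\perp$, with $\xi$ in a fixed neighbourhood of $\xi^*$ and $\delta=t\,(\eps/|\ln\eps|)^{\frac1{N-2}}$, $t$ ranging over a fixed compact subset of $(0,\infty)$. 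Projecting $u-i^*(f_\eps(u))=0$ onto $K_{\delta,\xi}^\perp$ and onto $K_{\delta,\xi}$ splits \eqref{Plog} into an auxiliary equation for $\phi$ and a bifurcation equation for $(t,\xi)$.

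The auxiliary equation reads $L_{\delta,\xi}\phi=\cN_{\delta,\xi}(\phi)+\mathcal{R}_{\delta,\xi}$, where $L_{\delta,\xi}$ is the projection onto $K_{\delta,\xi}^\perp$ of $\phi\mapsto\phi-i^*(f_\eps'(PU_{\delta,\xi})\phi)$, $\cN_{\delta,\xi}$ is the superlinear remainder, and $\mathcal{R}_{\delta,\xi}:=PU_{\delta,\xi}-i^*(f_\eps(PU_{\delta,\xi}))=i^*\big(U_{\delta,\xi}^{2^*-1}-f_\eps(PU_{\delta,\xi})\big)$ is the error of the ansatz. Since $f_\eps'(PU_{\delta,\xi})=(2^*-1)U_{\delta,\xi}^{2^*-2}(1+o(1))$ as $\eps\to0$ in the relevant (weighted) norms, the non-degeneracy of the standard bubble yields that $L_{\delta,\xi}$ is invertible on $K_{\delta,\xi}^\perp$ with inverse bounded uniformly in the above range; this step replaces the use of compactness of the Sobolev embedding, which is not available here. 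Writing $U_{\delta,\xi}^{2^*-1}-f_\eps(PU_{\delta,\xi})=\big(U_{\delta,\xi}^{2^*-1}-(PU_{\delta,\xi})^{2^*-1}\big)+(PU_{\delta,\xi})^{2^*-1}\big(1-[\ln(\e+PU_{\delta,\xi})]^{-\eps}\big)$, using $1-[\ln(\e+s)]^{-\eps}=\eps\ln\ln(\e+s)+\cO\big(\eps^2(\ln\ln(\e+s))^2\big)$ and $\ln(\e+U_{\delta,\xi})\sim\tfrac{N-2}{2}|\ln\delta|\sim\tfrac12|\ln\eps|$ at the bubble core, one obtains $\|\mathcal{R}_{\delta,\xi}\|=\cO(\delta^{\frac{N-2}{2}})+\cO(\eps\ln|\ln\eps|)=\cO(\sqrt{\eps/|\ln\eps|}\,)$; the $U_{\delta,\xi}$-versus-$PU_{\delta,\xi}$ part is in fact $\cO(\delta^{\gamma})$ with $\gamma>\tfrac{N-2}{2}$, so that $\|\mathcal{R}_{\delta,\xi}\|^2=o(\eps/|\ln\eps|)$. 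A contraction-mapping argument in a small ball of $K_{\delta,\xi}^\perp$ then gives a unique $\phi=\phi_{\delta,\xi}$, of class $C^1$ in $(\delta,\xi)$, with $\|\phi_{\delta,\xi}\|=\cO(\|\mathcal{R}_{\delta,\xi}\|)$. In particular $u=U_{\delta,\xi}+\Phi_\eps$ with $\Phi_\eps=-\varphi_{\delta,\xi}+\phi_{\delta,\xi}$ and $\|\Phi_\eps\|^2=\cO(\delta^{N-2}+\|\phi_{\delta,\xi}\|^2)=\cO(\eps/|\ln\eps|)$, as in \eqref{uepsbd}.

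Let $J_\eps(u):=\tfrac12\io|\nabla u|^2-\io F_\eps(u)$, $F_\eps(u):=\int_0^uf_\eps(s)\,\d s$, be the energy of \eqref{Plog}, and set $\wt J_\eps(\delta,\xi):=J_\eps(PU_{\delta,\xi}+\phi_{\delta,\xi})$; by the variational structure of the reduction, every critical point of $\wt J_\eps$ provides a solution of \eqref{Plog}. From $F_\eps(s)=\tfrac{s^{2^*}}{2^*}-\eps\int_0^s\tau^{2^*-1}\ln\ln(\e+\tau)\,\d\tau+\cO\big(\eps^2s^{2^*}(\ln\ln(\e+s))^2\big)$, the expansion of $PU_{\delta,\xi}$, the estimates on $\phi_{\delta,\xi}$ and $\mathcal{R}_{\delta,\xi}$, and $\ln(\e+PU_{\delta,\xi})\sim\tfrac{N-2}{2}|\ln\delta|$ at the bubble core, one derives a $C^1$-expansion, uniform in the parameter range,
\[
\wt J_\eps(\delta,\xi)=c_0+c_1\,\eps\ln|\ln\delta|+c_2\,\delta^{N-2}\varrho(\xi)+o\!\left(\tfrac{\eps}{|\ln\eps|}\right),
\]
where $c_0=\tfrac1N\irn U^{2^*}$ and $c_1,c_2>0$ are explicit constants depending only on $N$, the remainder being controlled together with its $(\delta,\xi)$-derivatives. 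The $\delta^{N-2}\varrho(\xi)$ term is the classical Green-function term coming from $\varphi_{\delta,\xi}$, while $c_1\eps\ln|\ln\delta|$ comes from the non-power part of $f_\eps$; in the $\xi$-directions the leading term is $c_2\delta^{N-2}\nabla\varrho(\xi)$ (the non-power contribution being of lower order, as its leading part is killed by parity around $\xi$), whereas in the $\delta$-direction the two terms genuinely compete, and the $\phi_{\delta,\xi}$-correction is of lower order by the variational identities of the reduction together with $\|\mathcal{R}_{\delta,\xi}\|^2=o(\eps/|\ln\eps|)$.

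Finally, with $\delta=t\,(\eps/|\ln\eps|)^{\frac1{N-2}}$ one has $|\ln\delta|=\tfrac1{N-2}|\ln\eps|+\cO(\ln|\ln\eps|)$, so that $c_1\eps\ln|\ln\delta|$ equals $-c_1(N-2)\tfrac{\eps}{|\ln\eps|}\ln t$ up to $(t,\xi)$-independent terms and $o(\eps/|\ln\eps|)$; hence, subtracting the $(t,\xi)$-independent part of $\wt J_\eps$ and multiplying by $|\ln\eps|/\eps$, the reduced functional converges in $C^1_{\mathrm{loc}}$ to
\[
\Psi(t,\xi)=-c_1(N-2)\ln t+c_2\,t^{N-2}\varrho(\xi).
\]
Since $\varrho(\xi^*)>0$, the system $\nabla_{(t,\xi)}\Psi=0$ — equivalent to $\nabla\varrho(\xi)=0$ and $t^{N-2}=c_1/(c_2\varrho(\xi))$ — has the solution $(d,\xi^*)$ with $d=\big(c_1/(c_2\varrho(\xi^*))\big)^{1/(N-2)}>0$, and the Hessian of $\Psi$ there is block-diagonal, with positive $t$-entry $c_1(N-2)^2/d^2$ and block $c_2d^{N-2}D^2\varrho(\xi^*)$, hence non-degenerate precisely because $\xi^*$ is a non-degenerate critical point of $\varrho$. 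Applying the implicit function theorem (or a local Brouwer-degree argument) to $\nabla_{(t,\xi)}\wt J_\eps=0$ then produces, for all sufficiently small $\eps>0$, a critical point $(t(\eps),\xi(\eps))\to(d,\xi^*)$; setting $\delta(\eps):=t(\eps)(\eps/|\ln\eps|)^{1/(N-2)}$, the function $u_\eps:=PU_{\delta(\eps),\xi(\eps)}+\phi_{\delta(\eps),\xi(\eps)}$ solves \eqref{Plog} with all the stated properties, positivity following from a standard maximum-principle argument (e.g.\ after truncating $f_\eps$ at $0$). The two main obstacles, both due to the non-power nonlinearity, are: (i) establishing the uniform invertibility of $L_{\delta,\xi}$ and the error and $\phi$-estimates directly, without the compactness machinery tailored to power nonlinearities; and (ii) expanding $\io F_\eps(PU_{\delta,\xi})$ and $\wt J_\eps$ precisely enough to produce the term $c_1\eps\ln|\ln\delta|$ — it is the balance of its $\delta$-derivative, of order $\eps/(\delta|\ln\delta|)$, against $c_2(N-2)\delta^{N-3}\varrho(\xi)$ that forces the scaling $\delta\sim(\eps/|\ln\eps|)^{1/(N-2)}$, the extra factor $|\ln\eps|^{-1}$ relative to the power-nonlinearity case arising precisely from the iterated logarithm in $f_\eps$.
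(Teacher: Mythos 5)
Your proposal is sound but takes a genuinely different route from the paper's. You set up the same Ljapunov--Schmidt reduction and solve the same auxiliary equation by contraction, but you then resolve the finite-dimensional problem variationally: you expand the reduced energy $\wt J_\eps(\delta,\xi)=J_\eps(PU_{\delta,\xi}+\phi_{\delta,\xi})$, rescale, and extract a limiting functional $\Psi(t,\xi)=-c_1(N-2)\ln t+c_2\,t^{N-2}\varrho(\xi)$ with a non-degenerate critical point at $(d,\xi^*)$. The paper instead projects the residual directly onto the approximate kernel $\mathrm{span}\{P\psi^j_{\delta,\xi}\}$, expands each component $I_1,\ldots,I_8$ separately, and finds a zero of the resulting vector field $F_\eps(d,\xi)$ by a Brouwer-degree argument (Proposition~\ref{main:prop}). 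The two are standard variants of the reduction and lead to the same balance $\eps\sim\delta^{N-2}|\ln\delta|$; your route makes the min/max structure and the block-diagonal Hessian transparent, but it is more demanding to justify in full: you need $C^1$-dependence of $\phi_{\delta,\xi}$ on $(\delta,\xi)$, the identification of critical points of $\wt J_\eps$ with solutions, and $C^1$-uniform control of the $o(\eps/|\ln\eps|)$ remainder after rescaling (the $\delta$-derivative introduces a factor $1/(\delta|\ln\delta|)$ that must be tracked carefully), whereas the paper only needs $C^0$-dependence and a degree count. In both approaches the genuinely new estimate is the contribution of the iterated logarithm: in the paper this is Lemma~\ref{I3}, where the leading $\ln|\ln\delta|$ piece is annihilated by $\irn U^p\psi^0=0$ and the surviving term is $-\frac{2d}{N-2}\mathfrak B\,\eps/|\ln\delta|$ with $\mathfrak B=-\irn U^p(\ln U)\psi^0>0$; in your energy expansion exactly the same cancellation and the same constant appear when differentiating $c_1\eps\ln|\ln\delta|$ in $\delta$. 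One small inaccuracy: your first-pass claim $\|\mathcal R_{\delta,\xi}\|=\cO(\delta^{(N-2)/2})$ conflates the size of $U_{\delta,\xi}-PU_{\delta,\xi}$ with the size of the residual $i^*(f_0(U_{\delta,\xi})-f_0(PU_{\delta,\xi}))$, which is in fact $\cO(\delta^{N-2})$ (or smaller for large $N$), as you note afterwards; the corrected bound $\|\mathcal R_{\delta,\xi}\|=\cO(\eps\ln|\ln\eps|)+\cO(\delta^{(N+2)/2})$, hence $\|\mathcal R_{\delta,\xi}\|^2=o(\eps/|\ln\eps|)$, is the one that is actually used and matches the paper's $\|\phi_{\delta,\xi}\|$ estimate in Proposition~\ref{prop:fixed_point}.
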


This seems to be the first existence result for problem~\eqref{Plog} when $\eps>0$ is arbitrarily small.  We point out that in any domain $\Omega$ the Robin function has at least one critical point which is a minimum point (since it tends to infinity at $\partial\Omega$) and also that the  minimum value is   strictly positive. 
Moreover, Micheletti and Pistoia    in~\cite[Theorem 1.1]{mp} proved  that, for almost every domain $\Omega$, the Robin function is a Morse function, i.e., all its critical points are non-degenerate.  It is also known that the origin is a non-degenerate critical point of the Robin function of a smooth bounded domain of $\R^N$ which is symmetric with respect to the origin and convex in any direction $x_1,\ldots,x_N$, as proved by Grossi in~\cite{g}.

Theorem \ref{main:thm} is shown using the Ljapunov-Schmidt reduction method. One of the advantages of this approach is that we obtain explicit information about the behavior of the solution.  In particular, it is interesting to compare the blow-up rate of the solution $u_\eps$ given by Theorem~\ref{main:thm} ($\|u_\eps\|_{\infty}\sim (|\ln\eps|\eps^{-1})^{\frac{1}{2}}$) with the blow-up rate $\eps^{-\frac{1}{2}}$ associated with the usual power nonlinearity, as shown by Bahri and Rey~\cite{blr}.

Theorem \ref{main:thm} is a first step towards establishing existence and multiplicity of positive and/or sign-changing solutions to problem \eqref{Plog} which blow up and/or blow down at different points in $\Omega$ as  $\eps\to0.$  This is motivated by a series of results which have been obtained in the last decades in the subcritical regime with power-type nonlinearities, namely, when the nonlinear term $f_\eps(u)$ is replaced by $|u|^{2^*-2-\eps}u$ with $\eps>0.$ In this case  the compactness of the Sobolev embedding $H^1_0(\Omega)\hookrightarrow L^{2^*-\eps}(\Omega)$ yields the existence of a  least-energy solution to
\begin{equation}\label{Psc}
\begin{cases}
-\Delta u =|u|^{2^*-2-\eps}u&\text{ in }\Omega,\\
u=0&\text{ on }\partial\Omega,
\end{cases}
\end{equation}
by standard variational methods. Han in~\cite{h} proved that as $\eps\to0$  this solution blows up at a  point $\xi_0\in\Omega$  and  its limit profile is a rescaling of the standard bubble  \eqref{bub:intro}.
Flucher and Wei in~\cite{fw} proved that $\xi_0$ is the minimum of the Robin function.
The existence of positive solutions  of \eqref{Psc} which blow up at different points in $\Omega$ was studied by Bahri, Li, and Rey in~\cite{blr} using a finite dimensional reduction procedure. A similar argument was used by Bartsch, Micheletti, and Pistoia in~\cite{bmp}  to prove the existence of 
sign-changing solutions of \eqref{Psc} which blow up or blow down at different points in $\Omega$. 
In both cases the location of the blow-up and blow-down point is given in terms of a reduced energy which involves the Green and the Robin function.  Finally, we also recall that sign-changing solutions of \eqref{Psc},  which  blow up and down at the same point (sometimes called the nodal towering point) have been found by Pistoia and Weth in~\cite{pw}.  In particular, Musso and Pistoia in~\cite{mup}  proved that any non-degenerate critical point of the Robin function is a nodal towering point. 

We  conjecture that similar results as those obtained in \cite{bmp,pw} can be extended to problem \eqref{Plog} with \eqref{flog}, however the proof requires some careful estimates to overcome the essential technical difficulties due to the strong nonlinearity \eqref{flog}.

\medskip

To close this introduction, we mention that in~\cite{dp}, Damascelli and Pardo found a priori bounds for the $p$-Laplacian version of \eqref{Plog}. Furthermore, the existence of uniform a priori bounds and, thus, of a positive solution to the Hamiltonian elliptic system 
\begin{equation}\label{Ham}
-\Delta u=v^{p}\big/[\ln (e+v)]^{\eps_1},\quad -\Delta v=u^{q}\big/[\ln (e+u)]^{\eps_2},\quad\text{in }\Omega,\qquad u=v= 0\quad\text{on }\partial \Omega, 
\end{equation}
with $\min \{\eps_1,\, \eps_2\}> 2/(N-2) $, and  $p,q$  lying in the  critical Sobolev hyperbola $\frac{1}{p+1} +\frac{1}{q+1} = \frac{N-2}{N}$,  was treated by Mavinga and Pardo in ~\cite{mp2017}.

\medskip

The paper is organized as follows.  In Section~\ref{sec:pre} we include some notation and well-known results regarding the Ljapunov-Schmidt method. In Section~\ref{sec:fdr} we structure the finite dimensional reduction for the problem, some of the results in this section are well known, but we include a proof for clarity and completeness. Finally, in Section~\ref{sec:fdp}, we find a critical point of the reduced problem. We close the paper with an appendix containing some useful estimates associated to the nonlinearity~\eqref{flog}.

\section{Preliminaries}\label{sec:pre}

Consider the Hilbert space $D^{1,2}(\rn):=\{u\in L^{2^*}(\rn):\nabla u\in L^2(\rn,\rn)\}$ with its usual inner product and norm,
$$\langle u,v\rangle:=\irn\nabla u\cdot\nabla v,\qquad \|u\|:=\left(\irn |\nabla u|^2\right)^{1/2}.$$
It is well known that the standard bubbles 
\begin{equation}\label{bub}
U(y):=\alpha_N   \frac{1}{(1+|y|^2)^{\frac{N-2}{2}}},\qquad
U_{\delta,\xi}(x)=\delta^{-\frac{N-2}2}U\left(\frac{x-\xi}{\delta}\right),\quad  \delta>0,\;\xi\in\rn,
\end{equation}
are the only positive solutions of the problem
\begin{align} \label{eq:rn}
-\Delta u = |u|^{2^*-2}u,\qquad u\in D^{1,2}(\rn),
\end{align}
where $\alpha_N=[N(N-2)]^\frac{N-2}{4}$. They satisfy
$$\|U_{\delta,\xi}\|^2=|U_{\delta,\xi}|_{2^*}^{2^*}=S^\frac{N}{2},$$
where $S$ is the best constant for the embedding $D^{1,2}(\rn)\hookrightarrow L^{2^*}(\rn)$ and $|\cdot|_p$ is the standard $L^p$-norm.

Set $p:=2^*-1$. It is well known that the kernel of the linearized operator $-\Delta-pU ^{p-1}{\mathtt I}$, \textit{i.e.}, the space of solutions to the problem
\begin{equation} \label{eq:linearized}
-\Delta \psi = pU ^{p-1}\psi,\qquad \psi\in D^{1,2}(\rn),
\end{equation}
is generated by the $N+1$ functions 
\begin{equation}\label{psi}\begin{aligned}
\psi ^0(y) &:= \frac{N-2}{2}\alpha_N \frac{|y|^2-1}{(1+|y|^2)^{N/2}},\\
\psi ^j(y) &: =(N-2)\alpha_N  \frac{y_j}{(1+|y|^2)^{N/2}},\quad j=1,\ldots,N.
\end{aligned}\end{equation}
Set
\begin{equation}\label{ppsi}
\begin{aligned}
\psi_{\delta,\xi}^0(x) &:=\delta^{-\frac{N-2}{2}}\psi^0\left(\frac{x-\xi}{\delta}\right)=\frac{N-2}{2}\alpha_N\,\delta^{\frac{N-2}{2}}\frac{|x-\xi|^2-\delta^2}{(\delta^2+|x-\xi|^2)^{N/2}},\\
\psi_{\delta,\xi}^j(x) &:=\delta^{-\frac{N-2}{2}}\psi^j\left(\frac{x-\xi}{\delta}\right)=(N-2)\alpha_N\,\delta^{\frac{N}{2}}\frac{x_j-\xi_j}{(\delta^2+|x-\xi|^2)^{N/2}},\quad j=1,\ldots,N.
\end{aligned}
\end{equation}
In particular,
\begin{align*}
\psi_{\delta,\xi}^0 =\de\,\frac{\partial U_{\delta,\xi}}{\partial\delta},\qquad
\psi_{\delta,\xi}^j=\de\,\frac{\partial U_{\delta,\xi}}{\partial\xi_j},\quad j=1,\ldots,N.
\end{align*}
Note that, as $U_{\delta,\xi}$ solves~\eqref{eq:rn}, any solution $\psi$ to~\eqref{eq:linearized} satisfies
$$\irn U_{\delta,\xi}^p\psi=\irn\nabla U_{\delta,\xi}\cdot\nabla\psi=p\irn U_{\delta,\xi}^p\psi.$$
In particular, 
\begin{equation} \label{eq:Upsi}
\langle U_{\delta,\xi},\psi^j_{\delta,\xi}\rangle = \irn U_{\delta,\xi}^p\psi^j_{\delta,\xi}=0\qquad\forall j=0,\ldots,N.
\end{equation}

We denote by $P:D^{1,2}(\rn)\to H^1_0(\Omega)$ the orthogonal projection, \textit{i.e.},  $PW$ is the unique solution to the problem
\begin{align*}
-\Delta(PW) = -\Delta W\quad \text{ in }\Omega,\qquad PW = 0\quad \text{ on }\partial \Omega.
\end{align*}

Next, we collect some well known estimates.
\begin{lemma}\label{lem:rey}
The following expansions hold true
\begin{align}
&P U_{\delta,\xi} =U_{\delta,\xi}- \alpha_N\,\delta^{\frac{N-2}{2}}H(\,\cdot\,,\xi) +\mathcal O(\delta^\frac{N+2}{2}),\label{e1}\\
&P\psi^0_{\delta,\xi}=\psi^0_{\delta,\xi}-\frac{N-2}{2}\alpha_N\delta^\frac{N-2}{2}H(\cdot,\xi)+ \mathcal  O(\delta^\frac{N+4}{2}),\label{e2}\\
&P\psi^j_{\delta,\xi}=\psi^j_{\delta,\xi}-\alpha_N\delta^\frac{N }{2}\partial_{\xi_j}H(\cdot,\xi)+ \mathcal  O(\delta^\frac{N+2}{2})\quad\text{for }j=1,\ldots,N,\label{e3}
\end{align}
as $\delta\to0$ uniformly with respect to $\xi$ in compact subsets of $\Omega$.
Moreover,
\begin{equation}\label{l2}|P\psi_{\delta,\xi}^j -\psi_{\delta,\xi}^j|_\frac{2N}{N-2}  =
\begin{cases}
\mathcal O\big(\delta^\frac{N-2}{2}\big)\ &\hbox{if}\ j=0,\\
\mathcal O\big(\delta^\frac{N}{2}\big)\ &\hbox{if}\ j=1,\dots,N, \end{cases}
\end{equation}
and
\begin{equation}\label{l3}\langle P\psi_{\delta,\xi}^i,P\psi_{\delta,\xi}^j\rangle=
\begin{cases}
c_i(1+o(1))>0\ &\hbox{if}\ i=j\\
o(1)\ &\hbox{if}\ i\not=j. 
\end{cases}
\end{equation} 
\end{lemma}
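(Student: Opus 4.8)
The plan is to establish the expansions \eqref{e1}--\eqref{e3} and the estimates \eqref{l2}--\eqref{l3} by reducing everything to the behavior of the projection operator $P$ on the bubble $U_{\delta,\xi}$ and its derivatives, and then exploiting the explicit decay of the kernel functions $\psi^j$ at infinity. First I would recall that $P W$ is characterized by $\varphi_W := W - PW$ being the harmonic extension of the boundary values of $W$, i.e., $\Delta \varphi_W = 0$ in $\Omega$ and $\varphi_W = W$ on $\partial\Omega$. For $W = U_{\delta,\xi}$, since $U_{\delta,\xi}(x) = \alpha_N \delta^{\frac{N-2}{2}} |x-\xi|^{-(N-2)} (1 + \mathcal O(\delta^2/|x-\xi|^2))$ for $x$ away from $\xi$, and recalling that $|x-\xi|^{-(N-2)} = c_N^{-1} G(x,\xi) + H(x,\xi)$ with $G(\cdot,\xi)=0$ on $\partial\Omega$, one sees that on $\partial\Omega$ one has $U_{\delta,\xi} = \alpha_N \delta^{\frac{N-2}{2}} H(\cdot,\xi) + \mathcal O(\delta^{\frac{N+2}{2}})$ uniformly for $\xi$ in compact subsets. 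Since $H(\cdot,\xi)$ is itself harmonic in $\Omega$, the harmonic extension of the right-hand side is just $\alpha_N \delta^{\frac{N-2}{2}} H(\cdot,\xi)$ up to the error term, and the maximum principle (applied to the harmonic function $\varphi_{U_{\delta,\xi}} - \alpha_N\delta^{\frac{N-2}{2}} H(\cdot,\xi)$, whose boundary data is $\mathcal O(\delta^{\frac{N+2}{2}})$) upgrades the boundary estimate to an interior $L^\infty$ estimate, giving \eqref{e1}. The estimates \eqref{e2} and \eqref{e3} follow the same scheme applied to $\psi^0_{\delta,\xi} = \delta\,\partial_\delta U_{\delta,\xi}$ and $\psi^j_{\delta,\xi} = \delta\,\partial_{\xi_j} U_{\delta,\xi}$: one differentiates the expansion \eqref{e1} in $\delta$ and in $\xi_j$ respectively (using that $P$ commutes with these derivatives, since $P$ is a fixed linear operator and differentiation in parameters passes through), keeping track of the powers of $\delta$ produced and noting that $\partial_\delta[\delta^{\frac{N-2}{2}} H(\cdot,\xi)] = \frac{N-2}{2}\delta^{\frac{N-4}{2}}H(\cdot,\xi)$ so that $\delta \cdot \partial_\delta$ brings back a factor $\delta^{\frac{N-2}{2}}$, and similarly $\delta\,\partial_{\xi_j}[\delta^{\frac{N-2}{2}}H(\cdot,\xi)] = \delta^{\frac N2}\partial_{\xi_j}H(\cdot,\xi)$, which accounts for the different powers in \eqref{e2} versus \eqref{e3}.

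Next, for \eqref{l2} I would compute the $L^{2^*}$-norm of the correction term $P\psi^j_{\delta,\xi} - \psi^j_{\delta,\xi}$, which by the above is, up to lower order, a smooth (hence bounded on compact sets) multiple of $\delta^{\frac{N-2}{2}}$ for $j=0$ or $\delta^{\frac N2}$ for $j=1,\dots,N$. Since the smooth function $H(\cdot,\xi)$ or $\partial_{\xi_j}H(\cdot,\xi)$ is bounded in $L^{2^*}(\Omega)$ uniformly for $\xi$ in a compact set, the claimed bounds $\mathcal O(\delta^{\frac{N-2}{2}})$ and $\mathcal O(\delta^{\frac N2})$ are immediate; one must also check that the $\mathcal O(\delta^{\frac{N+4}{2}})$ and $\mathcal O(\delta^{\frac{N+2}{2}})$ remainders are of strictly lower order in $L^{2^*}$, which they are. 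Then \eqref{l3} is obtained by first replacing each $P\psi^i_{\delta,\xi}$ by $\psi^i_{\delta,\xi}$ at the cost of controllable errors: writing $\langle P\psi^i, P\psi^j\rangle = \langle \psi^i, \psi^j\rangle - \langle \psi^i - P\psi^i, \psi^j\rangle - \langle P\psi^i, \psi^j - P\psi^j\rangle$ and using that each $\langle\cdot,\cdot\rangle$ pairing can be estimated via integration by parts (moving a Laplacian onto $U_{\delta,\xi}^{p}$, using $-\Delta\psi^i = p U^{p-1}\psi^i$ rescaled) together with \eqref{l2} and Hölder, one shows the cross terms are $o(1)$. For the leading term, $\langle \psi^i_{\delta,\xi}, \psi^j_{\delta,\xi}\rangle = \int_{\rn}\nabla\psi^i_{\delta,\xi}\cdot\nabla\psi^j_{\delta,\xi}$ is scale-invariant (the $\delta^{-(N-2)/2}$ factors and the change of variables $y = (x-\xi)/\delta$ exactly cancel), so it equals the fixed number $\int_{\rn}\nabla\psi^i\cdot\nabla\psi^j = p\int_{\rn}U^{p-1}\psi^i\psi^j$, which is a strictly positive constant $c_i$ when $i=j$ (the functions $\psi^i$ are nonzero) and is zero when $i\neq j$ by the orthogonality $\int_{\rn}U^{p-1}\psi^i\psi^j=0$ for $i\neq j$ — this last orthogonality follows from parity considerations (for $i,j\in\{1,\dots,N\}$, $i\neq j$, the integrand is odd in $y_i$; for $i=0$, $j\geq 1$, it is odd in $y_j$). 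Collecting these gives \eqref{l3} with the $(1+o(1))$ and $o(1)$ as stated.

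The routine but slightly delicate points are (i) making the reduction of $\varphi_{U_{\delta,\xi}}$ to $\alpha_N\delta^{\frac{N-2}{2}}H(\cdot,\xi)$ rigorous via the maximum principle, which requires care in expanding $U_{\delta,\xi}$ on $\partial\Omega$ and checking the error has the claimed order uniformly in $\xi$ on compacta; and (ii) justifying that $P$ commutes with $\partial_\delta$ and $\partial_{\xi_j}$ and that the remainders differentiate to remainders of the advertised order — this uses elliptic estimates for the harmonic extension together with differentiability of $H$ in both variables in the interior of $\Omega$. The one genuine conceptual step, as opposed to bookkeeping, is recognizing that the leading $\langle\psi^i,\psi^j\rangle$ is scale-invariant so that all $\delta$-dependence in \eqref{l3} sits in the correction terms, which is what makes the Gram matrix of the $P\psi^j_{\delta,\xi}$ invertible (diagonally dominant) for small $\delta$ — this invertibility is exactly what is needed later for the Ljapunov--Schmidt scheme. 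I expect the main obstacle to be organizing the error terms in \eqref{l3} cleanly: one wants to avoid estimating $\langle\psi^i-P\psi^i,\psi^j\rangle$ directly in $D^{1,2}$ (where $\psi^i - P\psi^i$ is harmonic and its norm is $\mathcal O(\delta^{\frac{N-2}{2}})$ times something, but the pairing against the non-integrable-at-infinity-in-$\rn$... actually $\psi^j$ is fine) and instead pass to the integral $\int_\Omega U^{p}_{\delta,\xi}\,(\psi^i - P\psi^i)$ form via \eqref{eq:Upsi}-type identities, where the explicit decay of $U^p_{\delta,\xi}$ makes the estimate transparent.
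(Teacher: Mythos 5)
Your proposal is correct in outline and in places more self-contained than the paper, which handles \eqref{e1}--\eqref{e3} by citing Rey~\cite[Proposition 1]{r}. Your argument for \eqref{e1} — that $U_{\delta,\xi}-PU_{\delta,\xi}$ is harmonic in $\Omega$ with boundary data $U_{\delta,\xi}|_{\partial\Omega}$, that $H(\cdot,\xi)=|\cdot-\xi|^{-(N-2)}$ on $\partial\Omega$ since $G(\cdot,\xi)$ vanishes there, and that the maximum principle converts a boundary expansion into an interior one — is exactly the classical one. The differentiation shortcut for \eqref{e2} and \eqref{e3} is a legitimate idea, but it hides a step you flag only loosely: an $\mathcal O$-estimate on the remainder of \eqref{e1} does not formally differentiate to an $\mathcal O$-estimate on $\delta\,\partial_\delta$ of that remainder. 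To make it precise you must go back to $\partial\Omega$, expand $\partial_\delta U_{\delta,\xi}|_{\partial\Omega}$ (resp.\ $\partial_{\xi_j}U_{\delta,\xi}|_{\partial\Omega}$) to the next order in $\delta$, and apply the maximum principle to that boundary data — which in practice is the same work as proving \eqref{e2}, \eqref{e3} directly for $\psi^0_{\delta,\xi}$, $\psi^j_{\delta,\xi}$, as Rey does. Carried out this way one gets a remainder of order $\delta^{\frac{N+2}{2}}$ in \eqref{e2}; whether this can be sharpened to the $\delta^{\frac{N+4}{2}}$ written in the lemma is immaterial, since \eqref{l2} and everything downstream only use the leading term $\tfrac{N-2}{2}\alpha_N\delta^{\frac{N-2}{2}}H(\cdot,\xi)$.

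For \eqref{l3}, your first add-and-subtract decomposition is awkward: for the leading term $\langle\psi^i_{\delta,\xi},\psi^j_{\delta,\xi}\rangle$ to be the scale-invariant $\rn$-integral, the cross pairings $\langle\psi^i_{\delta,\xi}-P\psi^i_{\delta,\xi},\,\cdot\,\rangle$ must then mix a function defined only on $\Omega$ with one living on all of $\rn$, and you would need a separate tail estimate to reconcile the domains. You then correctly pivot to the cleaner route, which is what the paper does: since $-\Delta P\psi^i_{\delta,\xi}=-\Delta\psi^i_{\delta,\xi}=f_0'(U_{\delta,\xi})\psi^i_{\delta,\xi}$, integration by parts gives $\langle P\psi^i_{\delta,\xi},P\psi^j_{\delta,\xi}\rangle=\io f_0'(U_{\delta,\xi})\psi^i_{\delta,\xi}\,P\psi^j_{\delta,\xi}$; H\"older together with \eqref{l2} replaces $P\psi^j_{\delta,\xi}$ by $\psi^j_{\delta,\xi}$ at cost $o(1)$; rescaling yields $(2^*-1)\irn U^{2^*-2}\psi^i\psi^j+o(1)$; and parity kills the off-diagonal entries. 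Your scale-invariance observation and the identification of the relevant oddness are exactly what the paper uses, so the two arguments agree after your pivot.
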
 
\begin{proof}
  For the proof of \eqref{e1}, \eqref{e2}, and \eqref{e3}, see \cite[Proposition 1]{r}.  Then \eqref{l2} follows from \eqref{e2} and \eqref{e3}. For \eqref{l3} we argue as in \cite[Lemma 3.1]{pt}. Observe that, by \eqref{e2} and \eqref{e3},
 \begin{align*}
  \langle P\psi^{i}_{\delta,\xi},P\psi^j_{\delta,\xi} \rangle
  =\io f_0'(U_{\delta,\xi})\psi^{i}_{\delta,\xi}P\psi^j_{\delta,\xi}
  =\io f_0'(U_{\delta,\xi})\psi^{i}_{\delta,\xi}\psi^j_{\delta,\xi}+o(1),
 \end{align*}
for $i,j=0,\ldots,N$ as $\delta\to 0$. Then, changing variables,
\begin{align*}
 \io f_0'(U_{\delta,\xi})\psi^{i}_{\delta,\xi}\psi^j_{\delta,\xi}
 =(2^*-1)\int_{\R^N} |U|^{2^*-2}\psi^{i}\psi^j+o(1),
\end{align*}
and \eqref{l3} follows by oddness, because
\begin{align*}
\psi^j(y)=
 \begin{cases}
\frac{N-2}{2}\alpha_N\frac{|y|^2-1}{(1+|y|^2)^{N/2}}, &j=0,\\  
(N-2)\alpha_N\frac{y_j}{(1+|y|^2)^{N/2}}, &j=1,\ldots,N.
\end{cases}
\end{align*}
\end{proof}

We set 
\begin{align*}
K_{\delta,\xi}&:=\mathrm{span}\{P\psi_{\delta,\xi}^j: j=0,\ldots,N\},\\
K_{\delta,\xi}^\perp &:=\{\phi\in H^1_0(\Omega):\langle\phi,P\psi_{\delta,\xi}^j\rangle=0,\;j=0,\ldots,N\},
\end{align*}
and denote by
$$\Pi_{\delta,\xi}:H^1_0(\Omega)\to K_{\delta,\xi}\qquad\text{and}\qquad \Pi_{\delta,\xi}^\perp:H^1_0(\Omega)\to K_{\delta,\xi}^\perp$$
the orthogonal projections.

\section{The finite dimensional reduction}\label{sec:fdr}

To prove our main result we apply the well-known Ljapunov-Schmidt reduction procedure; see~\cite{p} and the references therein for a detailed discussion of this approach.

Let $i^*:L^{\frac{2^*}{2^*-1}}(\Omega)\to H^1_0 (\Omega)$ be the adjoint operator of the embedding
$i:H^1_0 (\Omega)\hookrightarrow L^{2^*}(\Omega)$, \textit{i.e.}, $i^*[v]$ is the unique solution to the problem
\begin{equation*}
-\Delta u=v\,\text{ in }\Omega,\qquad u=0\,\text{ on }\partial\Omega.
\end{equation*}
It is well known that $i^*$ is a continuous map and
\begin{equation}
\label{istar}\|i^*(v)\|\le c |v|_\frac{2N}{N+2}\ \hbox{for any}\ v\in L^{\frac{2^*}{2^*-1}}(\Omega).
\end{equation}
Then, problem~\eqref{Plog} can be restated as
\begin{equation} \label{prob:inverse}
\begin{cases}
u=i^*[f_\eps(u)],\\
u\in H^1_0 (\Omega).
\end{cases}
\end{equation}

Let $[0,\delta_N]$ be the largest interval in which the function $\delta\mapsto\delta^{N-2}|\ln\delta|$ is strictly increasing and, for $\delta\in(0,\delta_N)$ and $d\in(0,\infty)$ define
\begin{equation} \label{eq:delta}
\eps=\eps(d,\delta):=d\,\delta^{N-2}|\ln\delta|.
\end{equation}

For suitable $(d,\xi)\in(0,\infty)\times\Omega$ and $\eps$ small enough, we look for a solution to the problem~\eqref{prob:inverse} having the form
\begin{align*}
PU_{\delta,\xi} + \phi\qquad\text{with \,}\phi\in K_{\delta,\xi}^\perp,
\end{align*}
where $\delta$ and $\eps$ are related by~\eqref{eq:delta}.

\begin{remark}\label{eps:rate}
\emph{Let $d_0>1$, $d\in(d_0^{-1},d_0)$, $\eps\in(0,1)$, and 
\begin{align}\label{rate1}
\delta=\delta(d,\eps)=\left(d\frac{\eps}{|\ln\eps|}\right)^\frac{1}{N-2}.
\end{align}
Then,
\begin{align}\label{arate}
\delta^{N-2}|\ln\delta|=
\frac{d}{N-2}\frac{\Big|\ln\left(d\frac{\eps}{|\ln\eps|}
\right)\Big|}{|\ln\eps|}\ \eps
=\kappa_{\eps,d}\,\eps,
\end{align}
where
\begin{align*}
\kappa_{\eps,d} &:= \frac{d}{N-2}\frac{\Big|\ln\left(d\frac{\eps}{|\ln\eps|}
\right)\Big|}{|\ln\eps|}
=\frac{d}{N-2}\frac{|\ln(d)+\ln(\eps)-\ln|\ln\eps||}{|\ln\eps|}\\
&=\frac{d}{N-2}\left|1-\frac{\ln(d)}{|\ln\eps|}+\frac{\ln|\ln\eps|}{|\ln\eps|}
\right|=\frac{d}{N-2}(1+o(1))
\end{align*}
as $\eps\to 0.$  In particular, there is $\kappa_{\eps,d}$ bounded away from zero and infinity such that the rate~\eqref{rate1} satisfies~\eqref{arate}.}
\end{remark}

Note that $PU_{\delta,\xi} + \phi$ satisfies~\eqref{prob:inverse} if and only if the following two identities hold true:
\begin{align}
&\Pi_{\delta,\xi}^\perp\big(PU_{\delta,\xi} + \phi-i^*[f_\eps(PU_{\delta,\xi} + \phi)]\big)=0,\label{eq:perp}\\
&\Pi_{\delta,\xi}\big(PU_{\delta,\xi} + \phi-i^*[f_\eps(PU_{\delta,\xi} + \phi)]\big)=0.\label{eq:kern}
\end{align}

First, we show that, for any $(d,\xi)\in(0,\infty)\times\Omega$ and every $\eps$ small enough, there exists a unique $\phi\in K_{\delta,\xi}^\perp$ which satisfies~\eqref{eq:perp}. To this end, we consider the linear operator $L_{\delta,\xi}:K_{\delta,\xi}^\perp\to K_{\delta,\xi}^\perp$ given by
$$L_{\delta,\xi}(\phi):=\phi-\Pi_{\delta,\xi}^\perp i^*[f'_0(U_{\delta,\xi})\phi].$$

\begin{proposition} \label{prop:invertible} For any $\delta_0>0$ and for any compact subset  $D$ of $\Omega$ there exists $ C>0$ such that, for every $\xi\in D$ and $\delta\in(0,\delta_0)$,
\begin{equation}
\|L_{\delta,\xi}(\phi)\| \geq C\|\phi\| \qquad\text{for all }\phi\in K_{\delta,\xi}^\perp,
\end{equation}
and the operator $L_{\delta,\xi}:K_{\delta,\xi}^\perp\to K_{\delta,\xi}^\perp$ is invertible.
\end{proposition}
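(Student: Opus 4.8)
The plan is to prove the coercivity estimate $\|L_{\delta,\xi}(\phi)\|\geq C\|\phi\|$ by contradiction, and then deduce invertibility from it together with a Fredholm-type argument. The key point is that $L_{\delta,\xi}$ is a compact perturbation of the identity: indeed, writing $L_{\delta,\xi}(\phi)=\phi-T_{\delta,\xi}(\phi)$ with $T_{\delta,\xi}(\phi):=\Pi_{\delta,\xi}^\perp i^*[f_0'(U_{\delta,\xi})\phi]$, the operator $T_{\delta,\xi}$ is compact because it factors through the compact embedding $H^1_0(\Omega)\hookrightarrow L^{2^*-1}$-type spaces (via the bound \eqref{istar} and the fact that $f_0'(U_{\delta,\xi})=(2^*-1)U_{\delta,\xi}^{2^*-2}$ is a bounded multiplier into the relevant Lebesgue space), so that once the lower bound is established, injectivity plus Fredholm alternative on $K_{\delta,\xi}^\perp$ gives surjectivity, hence invertibility.

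For the coercivity bound I would argue by contradiction: suppose there exist sequences $\delta_n\to\bar\delta\in[0,\delta_0]$, $\xi_n\to\bar\xi\in D$, and $\phi_n\in K_{\delta_n,\xi_n}^\perp$ with $\|\phi_n\|=1$ but $\|L_{\delta_n,\xi_n}(\phi_n)\|=:\|h_n\|\to 0$. The heart of the argument splits according to whether $\bar\delta>0$ or $\bar\delta=0$. If $\bar\delta>0$, then $U_{\delta_n,\xi_n}\to U_{\bar\delta,\bar\xi}$ strongly and one passes to the limit to get a nonzero $\phi\in H^1_0(\Omega)$ solving $\phi=\Pi^\perp i^*[f_0'(U_{\bar\delta,\bar\xi})\phi]$, i.e. $-\Delta\phi=(2^*-1)U_{\bar\delta,\bar\xi}^{2^*-2}\phi$ in $\Omega$ with $\phi$ orthogonal to $\operatorname{span}\{P\psi^j_{\bar\delta,\bar\xi}\}$; since the only solutions to the linearized equation on a bounded domain with Dirichlet data are trivial (or, after the rescaling, the kernel is spanned exactly by the $\psi^j$ which are killed by the orthogonality and the projection), this forces $\phi=0$, contradicting $\|\phi\|=1$ after checking the convergence $\|\phi_n\|\to\|\phi\|$ via the compactness of $T_{\delta_n,\xi_n}$. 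The genuinely delicate case is $\bar\delta=0$ (which is the one relevant to our application since $\delta=\delta(\eps)\to 0$): here $U_{\delta_n,\xi_n}$ concentrates, so I would rescale by setting $\tilde\phi_n(y):=\delta_n^{\frac{N-2}{2}}\phi_n(\delta_n y+\xi_n)$, which preserves the $D^{1,2}$-norm, and show that the rescaled equation converges to the limiting equation $-\Delta\tilde\phi=(2^*-1)U^{2^*-2}\tilde\phi$ on $\R^N$. The rescaled orthogonality conditions $\langle\phi_n,P\psi^j_{\delta_n,\xi_n}\rangle=0$ pass to the limit — using the expansions \eqref{e1}--\eqref{e3} and \eqref{l2} of Lemma~\ref{lem:rey} to control the difference between $P\psi^j_{\delta_n,\xi_n}$ and $\psi^j_{\delta_n,\xi_n}$ — to give $\langle\tilde\phi,\psi^j\rangle=0$ for all $j=0,\ldots,N$, so $\tilde\phi$ lies in the orthogonal complement of the kernel \eqref{psi} and must vanish. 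One then upgrades weak convergence $\tilde\phi_n\rightharpoonup 0$ to strong convergence $\|\tilde\phi_n\|\to 0$ by testing the equation against $\tilde\phi_n$ itself and invoking the compactness of the lower-order term (and a concentration-compactness/vanishing argument to exclude mass escaping to spatial infinity in the rescaled picture), contradicting $\|\tilde\phi_n\|=\|\phi_n\|=1$.

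The step I expect to be the main obstacle is the $\bar\delta=0$ case, and within it the handling of the cutoff/projection $\Pi_{\delta_n,\xi_n}^\perp$ and the truncation needed to reduce the bounded-domain problem on $\Omega$ to the entire-space problem on $\R^N$ after rescaling. One must be careful that $\phi_n\in H^1_0(\Omega)$ extended by zero, after rescaling, lives on the expanding domain $\delta_n^{-1}(\Omega-\xi_n)\to\R^N$; controlling boundary effects and showing that no $D^{1,2}$-mass is lost requires the decay estimates for $U_{\delta,\xi}$ near $\partial\Omega$ and the estimates in Lemma~\ref{lem:rey}. Once this rescaling limit is under control, the rest — identifying the limit as a kernel element, using orthogonality to kill it, and converting weak to strong convergence — follows the now-classical scheme (as in \cite{r} and the references in \cite{p}), and the invertibility is then immediate from the a~priori estimate and the Fredholm alternative applied to the compact operator $T_{\delta,\xi}$ on the Hilbert space $K_{\delta,\xi}^\perp$.
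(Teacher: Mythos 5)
Your overall scheme — contradiction, blow-up rescaling to $\R^N$, identification of the weak limit with an element of the kernel of $-\Delta - (2^*-1)U^{2^*-2}$, killing it by the orthogonality conditions, then upgrading weak to strong convergence by testing the equation against $\phi_n$, plus Fredholm for invertibility — is exactly the scheme the paper follows (following \cite{mp_2002}). The one genuine missing step, which you correctly flag as a likely obstacle but do not resolve, is the handling of the projection $\Pi_{\delta_n,\xi_n}^\perp$: to pass the rescaled equation to the limit you must control the $K_{\delta_n,\xi_n}$-component of $i^*[f_0'(U_{\delta_n,\xi_n})\phi_n]$. The paper does this explicitly by writing the equation $L_{\delta_n,\xi_n}\phi_n = z_n$ in the form
$$\int_\Omega\nabla\phi_n\cdot\nabla\varphi = \int_\Omega f_0'(U_{\delta_n,\xi_n})\phi_n\,\varphi + \int_\Omega\nabla(z_n+w_n)\cdot\nabla\varphi,\qquad w_n = \sum_{j=0}^N c_n^j\,P\psi^j_{\delta_n,\xi_n}\in K_{\delta_n,\xi_n},$$
and then showing $\|w_n\|\to 0$. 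The key observation is that the constraint $\phi_n\in K_{\delta_n,\xi_n}^\perp$ makes the leading term $\sum_j c_n^j\int f_0'(U_n)\phi_n\,\psi^j_n = \sum_j c_n^j\langle\phi_n, P\psi^j_n\rangle$ vanish identically, while the correction $\sum_j c_n^j\int f_0'(U_n)\phi_n(P\psi^j_n-\psi^j_n)$ is $o(\|w_n\|)$ by H\"older, the bound $|f_0'(U_n)|_{N/2}=O(1)$, and $|P\psi^j_n-\psi^j_n|_{2^*}=o(1)$ from \eqref{l2}; combined with $\|w_n\|\asymp\sum_j|c_n^j|$ from \eqref{l3} this forces $\|w_n\|\to 0$. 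Your sketch should incorporate this computation; without it the passage to the limit equation on $\R^N$ is not justified.

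A smaller issue: your case distinction $\bar\delta>0$ versus $\bar\delta=0$ is not how the paper proceeds — the paper's contradiction argument only considers $\delta_n\to 0$, which is the case that matters since in the application $\delta=\delta(\eps)\to 0$. Your claimed dispatch of the case $\bar\delta>0$ (``the only solutions to the linearized equation on a bounded domain with Dirichlet data are trivial'') is not justified as stated: for a fixed positive potential $V=(2^*-1)U_{\bar\delta,\bar\xi}^{2^*-2}$ the operator $-\Delta-V$ on $H^1_0(\Omega)$ can perfectly well have a nontrivial kernel, and the $\psi^j_{\bar\delta,\bar\xi}$ are not Dirichlet eigenfunctions on $\Omega$. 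If one genuinely wants uniformity for \emph{all} $\delta\in(0,\delta_0)$ with $\delta_0$ arbitrary, one would need a separate (and nontrivial) nondegeneracy argument; the safe reading of the proposition, consistent with the paper's proof, is that $\delta_0$ is taken small.
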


\begin{proof}
For sake of completeness, we give a   sketch of the proof which can also be found in~\cite[Lemma 1.7]{mp_2002}.
We argue by contradiction and suppose there exist sequences $\delta_n\to0$, $\xi_n\to\xi\in\Omega$, and $\phi_n,z_n\in K_{\delta_n,\xi_n}^\perp$
such that $\|\phi_n\|=1$, $\|z_n\|\to0$, and $z_n=L_{\delta_n,\xi_n}(\phi_n).$ In particular, there exists $w_n\in K_{\delta_n,\xi_n}$ such that
\begin{equation}\label{l1}\int\limits_\Omega \nabla\phi_n\nabla \varphi= \int\limits_\Omega f'_0( U_{\delta_n,\xi_n})\phi_n\varphi+\int\limits_\Omega \nabla(z_n+w_n)\nabla \varphi \ \hbox{for any}\ \varphi\in H^1_0(\Omega).\end{equation}
First of all, we claim that
$\|w_n\|\to0.$ 
Let, $w_n=\sum_{j=0}^Nc_n^j P\psi^j_{\delta_n,\xi_n}.$ By~\eqref{l3},
$\|w_n\|=\sum_{j=0}^N|c_n^j|(1+o(1))$
and, by~\eqref{l1},  
\begin{align*}\|w_n\|^2&=\underbrace{\langle \phi_n-z_n,w_n\rangle}_{=0}-\int\limits_\Omega f'_0(U_{\delta_n,\xi_n})\phi_nw_n\\
&= -\sum_{j=0}^Nc_n^j\underbrace{\int\limits_\Omega f'_0(U_{\delta_n,\xi_n}) \phi_n \psi^j_{\delta_n,\xi_n}}_{=\langle \phi_n, P\psi^j_{\delta_n,\xi_n}\rangle=0}-\sum_{j=0}^Nc_n^j\int\limits_\Omega f'_0(U_{\delta_n,\xi_n}) \phi_n\left(P\psi^j_{\delta_n,\xi_n}-\psi^j_{\delta_n,\xi_n}\right)\\
&\le \sum_{j=0}^N|c_n^j| 
\left |f'_0 (U_{\delta_n,\xi_n})\right |_{\frac N2}\underbrace{\left |P\psi^j_{\delta_n,\xi_n}-\psi^j_{\delta_n,\xi_n}\right |_{  {2N\over N-2}}}_{=o(1)} |\phi_n |
_{  {2N\over N-2}} \\
&=o(\|w_n\|)
\end{align*}
and the claim follows.

Now, we set $\tilde h(y):=\delta_n^{N-2\over2}h(\delta_n y+\xi_n),$ for $y\in \Omega_n:={\Omega-\xi_n\over\delta_n},$ so $ |\nabla \tilde h |_{2}= | \nabla  h |_{2}$ and $ | \tilde h|_{{2N\over N-2}}=| h |_{{2N\over N-2}}.$  
Then, by~\eqref{l1}, 
\begin{equation}\label{l4}\int\limits_{\Omega_n}\nabla \tilde \phi_n \nabla \tilde\varphi=\int\limits_{\Omega_n} f'_0( U)\tilde \phi_n\tilde\varphi+\int\limits_{\Omega_n}\nabla (\tilde z_n +\tilde w_n)\nabla \tilde\varphi\ \hbox{for any}\ \tilde \varphi\in C^\infty_0(\mathbb R^N).\end{equation}
Now, up to a subsequence $\tilde\phi_n\to \tilde\phi$ weakly in $\mathcal D^{1,2}(\mathbb R^N)$,
$\tilde z_n,\tilde w_n\to 0$ strongly in $\mathcal D^{1,2}(\mathbb R^N)$, and from~\eqref{l4} we get that $\phi\in\mathcal D^{1,2}(\mathbb R^N)$ solves
$$-\Delta \tilde\phi=f'_0(U)\tilde\phi \ \hbox{in}\ \mathbb R^N.$$
Moreover, since for any $j$
$$0=\int\limits_{\Omega}\nabla \tilde  \phi_n \nabla P\psi^j_{\delta_n,\xi_n}=\int\limits_{\Omega}f'_0(U_{\delta_n,\xi_n})  \tilde\phi_n  \psi^j_{\delta_n,\xi_n}=\int\limits_{\Omega_n}f'(U )  \tilde\phi_n  \psi^j\to\int\limits_{\mathbb R^N}f'(U ) \tilde\phi  \psi^j , $$
we get $\tilde\phi=0.$\\
On the other hand, testing  ~\eqref{l1} by $\phi_n$ and scaling, we have
$$1=\int\limits_{\Omega_n} f'_0( U)\tilde \phi_n^2+\int\limits_{\Omega_n}\nabla (\tilde z_n +\tilde w_n)\nabla \tilde\phi_n=o(1),$$
and a contradiction arises.

The invertibility follows from Fredholm's theory because $L_{\delta,\xi}$ is a compact perturbation of the identity.
\end{proof}

It is useful to recall the following well-known estimates.

\begin{lemma}\label{lem:U:q}
\begin{equation}\label{U:q}
\io U_{\delta,\xi}^{q}(x)\, \d x= 
\begin{cases}
\mathcal O\left( \delta^{\frac{N-2}{2}\, q}\right)\ \ &\hbox{if}\ 0<q<\frac{N}{N-2},\\
\mathcal O\left( \delta^{\frac{N}{2}}\,|\ln\delta|\right)\ &\hbox{if}\ q=\frac{N}{N-2}, \\
\mathcal O\left( \delta^{N-\frac{N-2}{2}\, q}\right)\ &\hbox{if}\ \frac{N}{N-2}<q\le 2^*,\\
\end{cases}
\end{equation}
\begin{equation}\label{psi:0:q}
\io |\psi_{\delta,\xi}^0(x)|^{q}\, \d x= 
\begin{cases}
\mathcal O\left( \delta^{\frac{N-2}{2}\, q}\right)\ \ &\hbox{if}\ 0<q<\frac{N}{N-2},\\
\mathcal O\left( \delta^{\frac{N}{2}}\,|\ln\delta|\right)\ &\hbox{if}\ q=\frac{N}{N-2}, \\
\mathcal O\left( \delta^{N-\frac{N-2}{2}\, q}\right)\ &\hbox{if}\ \frac{N}{N-2}<q\le 2^*,\\
\end{cases}
\end{equation}
and
\begin{equation}\label{psi:j:q}
\io |\psi_{\delta,\xi}^j(x)|^{q}\, \d x= 
\begin{cases}
\mathcal O\left( \delta^{\frac{N}{2}\, q}\right)\ \ &\hbox{if}\ 0<q<\frac{N}{N-1},\\
\mathcal O\Big( \delta^{\frac{N^2}{2(N-1)}}\,|\ln\delta|\Big)\ &\hbox{if}\ q=\frac{N}{N-1}, \\
\mathcal O\left( \delta^{N-\frac{N-2}{2}\, q}\right)\ &\hbox{if}\ \frac{N}{N-1}<q\le 2^*,\\
\end{cases}
\end{equation}
for $j=1,\cdots, N$.
\end{lemma}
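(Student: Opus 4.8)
The plan is to reduce all three identities to a single scaling computation followed by an elementary one-dimensional estimate. First I would write the functions explicitly from \eqref{bub} and \eqref{ppsi}:
\begin{align*}
U_{\delta,\xi}(x)&=\al_N\frac{\de^{\frac{N-2}{2}}}{(\de^2+|x-\xi|^2)^{\frac{N-2}{2}}},\qquad
\psi^0_{\delta,\xi}(x)=\tfrac{N-2}{2}\al_N\frac{\de^{\frac{N-2}{2}}\bigl(|x-\xi|^2-\de^2\bigr)}{(\de^2+|x-\xi|^2)^{N/2}},\\
\psi^j_{\delta,\xi}(x)&=(N-2)\al_N\frac{\de^{N/2}(x_j-\xi_j)}{(\de^2+|x-\xi|^2)^{N/2}},\qquad j=1,\dots,N.
\end{align*}
Then, after the change of variables $x=\xi+\de y$, which maps $\Omega$ onto $\Omega_\delta:=(\Omega-\xi)/\de$, each of $\io U_{\delta,\xi}^{q}$, $\io|\psi^0_{\delta,\xi}|^{q}$, $\io|\psi^j_{\delta,\xi}|^{q}$ takes the common form $C\,\de^{\,N-\frac{N-2}{2}q}\int_{\Omega_\delta}g(y)\,\d y$, where $C=C(N,q)>0$ and $g(y)=(1+|y|^2)^{-\frac{N-2}{2}q}$, $g(y)=\bigl||y|^2-1\bigr|^{q}(1+|y|^2)^{-\frac N2 q}$, or $g(y)=|y_j|^{q}(1+|y|^2)^{-\frac N2 q}$, respectively. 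In each case $g$ is bounded on bounded sets and behaves like $|y|^{-\beta q}$ as $|y|\to\infty$, with $\beta=N-2$ in the first two cases and $\beta=N-1$ in the third.

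Next I would use the elementary observation that, since $\xi$ ranges over a compact subset of $\Omega$ and $\de$ is small, there exist $0<r_1<r_2$ (depending only on that compact subset) with $B_{r_1/\de}(0)\subset\Omega_\delta\subset B_{r_2/\de}(0)$; hence $\int_{\Omega_\delta}g$ is, up to multiplicative constants, comparable to $\int_{B_{1/\de}(0)}g$. Splitting this last integral into $\{|y|\le1\}$, which contributes $\mathcal O(1)$, and $\{1<|y|<1/\de\}$, and passing to polar coordinates (integrating out the angular variable, which for $\psi^j_{\delta,\xi}$ only produces a harmless positive constant), one is left with $\int_1^{1/\de}t^{N-1-\beta q}\,\d t$. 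This integral is $\mathcal O(1)$ when $\beta q>N$, equals a constant times $|\ln\de|$ when $\beta q=N$, and is $\mathcal O\bigl(\de^{\beta q-N}\bigr)$ when $0<\beta q<N$.

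Finally I would multiply by the prefactor $\de^{\,N-\frac{N-2}{2}q}$ and collect exponents in the three regimes. For $U_{\delta,\xi}$ (here $\beta=N-2$): $q>\tfrac{N}{N-2}$ gives $\mathcal O\bigl(\de^{N-\frac{N-2}{2}q}\bigr)$; $q=\tfrac{N}{N-2}$ gives $\mathcal O\bigl(\de^{N/2}|\ln\de|\bigr)$; and $0<q<\tfrac{N}{N-2}$ gives $\de^{N-\frac{N-2}{2}q}\de^{(N-2)q-N}=\de^{\frac{N-2}{2}q}$, which is \eqref{U:q}. The computation for $\psi^0_{\delta,\xi}$ is identical (same $\beta$, same prefactor), giving \eqref{psi:0:q}. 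For $\psi^j_{\delta,\xi}$ (here $\beta=N-1$): $q>\tfrac{N}{N-1}$ gives $\mathcal O\bigl(\de^{N-\frac{N-2}{2}q}\bigr)$; $q=\tfrac{N}{N-1}$ gives $\de^{N-\frac{N-2}{2}\cdot\frac{N}{N-1}}|\ln\de|=\de^{\frac{N^2}{2(N-1)}}|\ln\de|$; and $0<q<\tfrac{N}{N-1}$ gives $\de^{N-\frac{N-2}{2}q}\de^{(N-1)q-N}=\de^{\frac N2 q}$, which is \eqref{psi:j:q}. The argument is entirely routine; the only mild technical points I anticipate are the uniform two-sided ball inclusion for $\Omega_\delta$ (which yields uniformity with respect to $\xi$ in compact sets) and keeping the three decay regimes apart, so there is no genuine obstacle.
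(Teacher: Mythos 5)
Your proposal is correct and follows essentially the same route as the paper: rescale by $x=\xi+\delta y$ to pull out the prefactor $\delta^{\,N-\frac{N-2}{2}q}$, then estimate the rescaled integral in polar coordinates and split into three regimes according to the sign of $N-\beta q$. The only difference is cosmetic — you organize the three cases through the single decay exponent $\beta$ and spell out the two-sided ball inclusion for uniformity in $\xi$, whereas the paper proves only \eqref{U:q} in detail and appeals to analogy for \eqref{psi:0:q}--\eqref{psi:j:q}.
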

\begin{proof} We prove the estimate~\eqref{U:q}. The other two are obtained in a similar way. In the following $C>0$ denotes a constant independent of $\delta$ and $\xi$, not necessarily the same one.  We perform the change of variable $x-\xi=\delta y$ and set $\Omega_\delta:=\frac{1}{\delta}(\Omega-\xi)$. 
By~\eqref{bub}, for $\delta\in(0,\de_0)$ with $\de_0$ small enough, 
we obtain
\begin{equation*}
\io U_{\delta,\xi}^{q}(x)\d x =\delta^{N-\frac{N-2}{2}q}\int_{\Omega_\delta} U^{q}(y)\d y.
\end{equation*}
Assume now $0<q<\frac{N}{N-2}$, since $1+r^2\ge \max\{1,r^2\}$, then
\begin{align*}
\int_{\Omega_\delta} U^{q}(y)\d y
&\le C \int_0^{c/\delta}\frac{r^{N-1}}{(1+r^2)^{\frac{N-2}{2}q}}\,d r\le C \left(\int_0^{1}r^{N-1}\,d r
+\int_1^{c/\delta} r^{N-1-(N-2)q}\,d r\right)\\
&\le C\delta^{-N+(N-2)q}.
\end{align*}
On the other hand, if $q=\frac{N}{N-2}$, then
\begin{align*}
\int_{\Omega_\delta} U^{q}(y)\d y
\le C \int_0^{c/\delta}\frac{r^{N-1}}{(1+r^2)^{\frac{N}{2}}}\,d r\le C \left(\int_0^{1}r^{N-1}\,d r
+\int_1^{c/\delta} r^{-1}\,d r\right)\le C|\ln\delta|.
\end{align*}
Finally, if $\frac{N}{N-2}<q\le 2^*$, then
\begin{align*}
\int_{\Omega_\delta} U^{q}(y)\d y
\le C \int_0^\infty \frac{r^{N-1}}{(1+r^2)^{\frac{N-2}{2}q}}\,d r = C.
\end{align*}
This ends the proof.
\end{proof}	

\begin{lemma}\label{k1}
\begin{equation}\label{s1}
\left |f_0\left(PU_{\delta,\xi}\right)-f_0\left(U_{\delta,\xi}\right)\right |_{{2N\over N+2}}=
\begin{cases}
\mathcal O\left( \delta^{N-2}\right)\ &\hbox{if}\ 3\le N\le 5,\\
\mathcal O\left( \delta^4|\ln\delta|^{2/3}\right)\ &\hbox{if}\ N=6, \\
\mathcal O\left( \delta^{N+2\over2}\right)\ &\hbox{if}\ N\ge 7,\\
\end{cases}
\end{equation}
\begin{equation}\label{s2}
\left |f'_0\left(PU_{\delta,\xi}\right)-f'_0\left(U_{\delta,\xi}\right)\right |_{{N\over 2}}
=
\begin{cases}
\mathcal O\left( \delta\right)\ &\hbox{if}\ N=3,\\
\mathcal O\left( \delta^{2}|\ln\delta|^{1/2}\right)\ &\hbox{if}\ N=4, \\
\mathcal O\left( \delta^{2}\right)\ &\hbox{if}\ N\ge 5,\\
\end{cases}
\end{equation}
and
\begin{align}\label{f0:f'0}
\big |f_0(PU_{\delta,\xi})&-f_0(U_{\delta,\xi})-f'_0(U_{\delta,\xi}) ( PU_{\delta,\xi}(x) -U_{\delta,\xi}(x))\big |_{{\frac{N}2}}
=
\begin{cases}
\mathcal O\left( \delta^{\frac{N+2}{2}}\right)\ &\hbox{if}\ N=3,\\
\mathcal O\left( \delta^{\frac{N+2}{2}}|\ln\delta|^{1/2}\right)\ &\hbox{if}\ N=4, \\
\mathcal O\left( \delta^{\frac{N+2}{2}}\right)\ &\hbox{if}\ N\ge 5,\\
\end{cases}
\end{align}
\end{lemma}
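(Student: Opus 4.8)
The strategy is to reduce each of the three estimates to the integral asymptotics of Lemma~\ref{lem:U:q} by means of elementary pointwise inequalities for $t\mapsto t^{p}$, where $p:=2^{\ast}-1=\frac{N+2}{N-2}$. Throughout I write $W:=PU_{\delta,\xi}$ and $\varphi:=U_{\delta,\xi}-W$. Two preliminary facts organize the computation. First, by the maximum principle $U_{\delta,\xi}-W$ is harmonic in $\Omega$ and coincides with $U_{\delta,\xi}>0$ on $\partial\Omega$, hence $0\le W\le U_{\delta,\xi}$ and $\varphi\ge 0$. Second, by~\eqref{e1} together with the fact that $H(\cdot,\xi)$ is harmonic and positive in $\Omega$ with boundary values $|x-\xi|^{-(N-2)}$ bounded uniformly for $\xi$ in the given compact set, one has $0\le\varphi\le c\,\delta^{\frac{N-2}{2}}$ uniformly in $\xi$ and in $\delta$ small.

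For~\eqref{s1}: since $0\le W\le U_{\delta,\xi}$ and $p\ge 1$, the inequality $0\le b^{p}-a^{p}\le p\,b^{p-1}(b-a)$, valid for $0\le a\le b$, gives pointwise $0\le f_{0}(U_{\delta,\xi})-f_{0}(W)\le p\,U_{\delta,\xi}^{p-1}\varphi\le c\,\delta^{\frac{N-2}{2}}\,U_{\delta,\xi}^{4/(N-2)}$. Taking the $L^{2N/(N+2)}$ norm and applying Lemma~\ref{lem:U:q} with exponent $q=\frac{4}{N-2}\cdot\frac{2N}{N+2}=\frac{8N}{N^{2}-4}$, which satisfies $q\gtrless\frac{N}{N-2}$ exactly when $N\lessgtr 6$, the three regimes of Lemma~\ref{lem:U:q} reproduce the three cases of~\eqref{s1}, the logarithmic factor in dimension~$6$ being the borderline case $q=\frac{N}{N-2}$. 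Estimate~\eqref{s2} is handled the same way for $f_{0}'(u)=p|u|^{p-1}$: for $N\le 6$ one has $p-1\ge 1$ and $|W^{p-1}-U_{\delta,\xi}^{p-1}|\le(p-1)U_{\delta,\xi}^{p-2}\varphi\le c\,\delta^{\frac{N-2}{2}}U_{\delta,\xi}^{p-2}$ (with $U_{\delta,\xi}^{0}\equiv 1$ when $N=6$), while for $N\ge 7$ one has $0<p-1<1$, and subadditivity of $t\mapsto t^{p-1}$ yields the cleaner estimate $|W^{p-1}-U_{\delta,\xi}^{p-1}|\le\varphi^{p-1}\le c\,\delta^{\frac{(N-2)(p-1)}{2}}=c\,\delta^{2}$; in each case Hölder's inequality and Lemma~\ref{lem:U:q} conclude, the threshold $q=\frac{N}{N-2}$ now being saturated in dimension~$4$, whence the logarithm there.

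For~\eqref{f0:f'0}, put $R:=f_{0}(W)-f_{0}(U_{\delta,\xi})-f_{0}'(U_{\delta,\xi})(W-U_{\delta,\xi})$ and recall $W-U_{\delta,\xi}=-\varphi$. If $p\ge 2$ (that is, $N\le 6$), then $f_{0}\in C^{2}$ and Taylor's formula with integral remainder, together with $U_{\delta,\xi}-s\varphi\le U_{\delta,\xi}$ for $s\in[0,1]$, gives $|R|\le\tfrac{p(p-1)}{2}\,U_{\delta,\xi}^{p-2}\varphi^{2}\le c\,\delta^{N-2}\,U_{\delta,\xi}^{p-2}$; one then takes the $L^{N/2}$ norm and uses Lemma~\ref{lem:U:q}, the threshold exponent again falling in dimension~$4$. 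If $1<p<2$ (that is, $N\ge 7$), then the inequality $|b^{p}-a^{p}-p\,b^{p-1}(b-a)|\le c\,|b-a|^{p}$ for $0\le a\le b$ --- obtained by writing the left-hand side as $\int_{a}^{b}p(b^{p-1}-t^{p-1})\,dt$ and using subadditivity of $t\mapsto t^{p-1}$ --- gives $|R|\le c\,\varphi^{p}\le c\,\delta^{\frac{(N-2)p}{2}}=c\,\delta^{\frac{N+2}{2}}$, and the $L^{N/2}$ bound follows by boundedness of $\Omega$.

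The routine parts are the applications of Hölder's inequality and the substitutions into Lemma~\ref{lem:U:q}; the step that requires care is the derivation of the sharp pointwise inequalities for $f_{0}(W)-f_{0}(U_{\delta,\xi})$, $f_{0}'(W)-f_{0}'(U_{\delta,\xi})$ and $R$ which are uniform in $(\delta,\xi)$ and which, in the high-dimensional range $N\ge 7$ where $f_{0}$ is not twice differentiable, replace the missing second derivative by subadditivity of a fractional power. Tracking which dimension saturates the exponent $\frac{N}{N-2}$ in Lemma~\ref{lem:U:q} is exactly what produces the logarithmic factors.
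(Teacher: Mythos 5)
Your approach to all three estimates is essentially the paper's: elementary pointwise inequalities for powers (here the one-sided Taylor bounds exploiting $0\le PU_{\delta,\xi}\le U_{\delta,\xi}$ instead of the two-sided inequalities \eqref{y1}--\eqref{y2}, which is a harmless simplification) combined with Lemma~\ref{lem:U:q}. Carrying the computation through, \eqref{s1} and \eqref{s2} come out exactly as stated, and your case split at $N=7$ via subadditivity of $t\mapsto t^{p-1}$ for $0<p-1<1$ correctly replaces the missing second derivative.

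There is, however, a genuine gap in \eqref{f0:f'0} for $N=3$. Your bound $|R|\le \tfrac{p(p-1)}{2}\,U_{\delta,\xi}^{p-2}\varphi^{2}\le c\,\delta^{N-2}U_{\delta,\xi}^{p-2}$ (which is sharp near $\xi$, where $\varphi\ll U_{\delta,\xi}$), followed by the $L^{N/2}$ norm and Lemma~\ref{lem:U:q}, gives for $N=3$, $p=5$, $p-2=3$ the exponent $q=(p-2)\tfrac{N}{2}=\tfrac92>\tfrac{N}{N-2}=3$, hence $|U_{\delta,\xi}^{p-2}|_{N/2}=\mathcal O(\delta^{1/2})$ and $|R|_{N/2}=\mathcal O(\delta^{N-2}\cdot\delta^{1/2})=\mathcal O(\delta^{3/2})$ — not the stated $\mathcal O(\delta^{(N+2)/2})=\mathcal O(\delta^{5/2})$. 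A direct check on $\Omega=B_1(0)$, $\xi=0$, $N=3$ (where $\varphi\equiv\alpha_3\delta^{1/2}(1+\delta^2)^{-1/2}$ is constant) confirms $|R|_{3/2}\sim\delta^{3/2}$, so the discrepancy is not an artifact of a wasteful estimate. You should state explicitly that for $N=3$ your method yields $\delta^{3(N-2)/2}=\delta^{3/2}$, and you should note that the paper's own one-line proof (via \eqref{y2} with $q=p-1\ge 1$, which adds only the smaller term $\varphi^{p}=\mathcal O(\delta^{(N+2)/2})$) produces the same $\delta^{3/2}$; the exponent printed in \eqref{f0:f'0} for $N=3$ therefore appears to be a misprint for $\delta^{3(N-2)/2}$, and a reader should then re-verify that this weaker rate still suffices wherever \eqref{f0:f'0} is invoked in $I_1$.
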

\begin{proof} 
The following inequalities are well known. For any $a>0$ and $b\in\mathbb R$,
\begin{equation}\label{y1}
\left||a+b|^q-a^q\right|\le 
\begin{cases}
c(q)\min\{|b|^q,a^{q-1}|b|\} \ &\hbox{if}\ 0<q<1,\\
c(q)\left(|b|^q+a^{q-1}|b|\right) \ &\hbox{if}\ q\ge1,\\
\end{cases}
\end{equation}
and
\begin{equation}\label{y2}
\left||a+b|^q(a+b)-a^{q+1}-(1+q)a^qb\right|\le 
\begin{cases}
c(q)\min\{|b|^{q+1},a^{q-1}b^2\}\ &\hbox{if}\ 0<q<1,\\
c(q)\left(|b|^{q+1}+a^{q-1}b^2\right)\ &\hbox{if}\ q\ge1.\\
\end{cases}
\end{equation}
Estimates \eqref{s1}, \eqref{s2}, and \eqref{f0:f'0} follow from these inequalities and Lemma \ref{lem:U:q}. 
\end{proof}

\begin{lemma}\label{k2}
\begin{equation}\label{s11}
\left |f_\eps\left(PU_{\delta,\xi}\right)-f_0\left(PU_{\delta,\xi}\right)\right|_{{2N\over N+2}}= \mathcal O\left(\eps\ln |\ln\delta|\right)\end{equation}
and
\begin{equation}\label{s12}
\left|f'_\eps\left(PU_{\delta,\xi}\right)-f'_0\left(PU_{\delta,\xi}\right)\right|_{{N\over 2}}= \mathcal O\left(\eps\ln |\ln\delta|\right).
\end{equation}
\end{lemma}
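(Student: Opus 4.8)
The plan is to reduce both estimates to pointwise bounds on $f_\eps-f_0$ and $f_\eps'-f_0'$ and then integrate using Lemma~\ref{lem:U:q}. Since $PU_{\delta,\xi}$ solves $-\Delta(PU_{\delta,\xi})=U_{\delta,\xi}^{2^*-1}>0$ in $\Omega$ with zero boundary data, the maximum principle gives $0<PU_{\delta,\xi}<U_{\delta,\xi}$ in $\Omega$; in particular $\|PU_{\delta,\xi}\|_\infty\le\|U_{\delta,\xi}\|_\infty=\alpha_N\delta^{-\frac{N-2}{2}}$. Hence it suffices to work with $f_\eps(s)=s^{2^*-1}[\ln(\e+s)]^{-\eps}$ for $s>0$.

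First I would record the elementary inequality: for every $s\ge 0$,
\[
\big|[\ln(\e+s)]^{-\eps}-1\big|=\big|e^{-\eps\ln\ln(\e+s)}-1\big|\le \eps\,\ln\ln(\e+s),
\]
which follows from $|e^{-x}-1|\le x$ for $x\ge0$ together with $\ln(\e+s)\ge1$ (so $\ln\ln(\e+s)\ge0$). Therefore
\[
\big|f_\eps(s)-f_0(s)\big|=s^{2^*-1}\big|[\ln(\e+s)]^{-\eps}-1\big|\le \eps\, s^{2^*-1}\ln\ln(\e+s),
\]
and, differentiating $f_\eps$ for $s>0$,
\[
f_\eps'(s)-f_0'(s)=(2^*-1)s^{2^*-2}\big([\ln(\e+s)]^{-\eps}-1\big)-\frac{\eps\, s^{2^*-1}}{(\e+s)\,[\ln(\e+s)]^{\eps+1}},
\]
so, bounding the last term by $\eps s^{2^*-2}$ (using $\ln(\e+s)\ge1$ and $s/(\e+s)\le1$),
\[
\big|f_\eps'(s)-f_0'(s)\big|\le C\eps\, s^{2^*-2}\big(1+\ln\ln(\e+s)\big).
\]

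Next I would use the monotonicity of $s\mapsto\ln\ln(\e+s)$ on $[0,\infty)$ together with $0<PU_{\delta,\xi}\le\alpha_N\delta^{-\frac{N-2}{2}}$ to get, pointwise in $\Omega$ and for $\delta$ small,
\[
1+\ln\ln\!\big(\e+PU_{\delta,\xi}\big)\le 1+\ln\ln\!\big(\e+\alpha_N\delta^{-\frac{N-2}{2}}\big)=\mathcal O\big(\ln|\ln\delta|\big),
\]
since $\ln(\e+\alpha_N\delta^{-\frac{N-2}{2}})=\mathcal O(|\ln\delta|)$ as $\delta\to0$. Combining this with the pointwise estimates above and with $0<PU_{\delta,\xi}<U_{\delta,\xi}$ (so that the powers $(PU_{\delta,\xi})^{2^*-1}$, $(PU_{\delta,\xi})^{2^*-2}$ are dominated by $U_{\delta,\xi}^{2^*-1}$, $U_{\delta,\xi}^{2^*-2}$) yields
\[
\big|f_\eps(PU_{\delta,\xi})-f_0(PU_{\delta,\xi})\big|\le C\eps\ln|\ln\delta|\;U_{\delta,\xi}^{2^*-1},\qquad
\big|f_\eps'(PU_{\delta,\xi})-f_0'(PU_{\delta,\xi})\big|\le C\eps\ln|\ln\delta|\;U_{\delta,\xi}^{2^*-2}.
\]
Finally, taking the $L^{\frac{2N}{N+2}}$-norm of the first inequality and the $L^{\frac N2}$-norm of the second, and observing that $(2^*-1)\frac{2N}{N+2}=2^*$ and $(2^*-2)\frac{N}{2}=2^*$, the remaining factors become powers of $\io U_{\delta,\xi}^{2^*}\,\d x$, which is $\mathcal O(1)$ by Lemma~\ref{lem:U:q} (case $q=2^*>\frac{N}{N-2}$, where the exponent $N-\frac{N-2}{2}2^*$ vanishes). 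This gives~\eqref{s11} and~\eqref{s12}.

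I do not expect a genuine obstacle: the only point requiring care is the control of the iterated logarithm $\ln\ln(\e+PU_{\delta,\xi})$, which is exactly where the $L^\infty$-size $\delta^{-\frac{N-2}{2}}$ of the bubble enters and produces the factor $\ln|\ln\delta|$ (rather than a constant); everything else is the inequality $|e^{-x}-1|\le x$ and the standard integral bounds of Lemma~\ref{lem:U:q}.
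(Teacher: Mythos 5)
Your proposal is correct and follows essentially the same route as the paper: pointwise bounds on $f_\eps-f_0$ and $f_\eps'-f_0'$ of the form $\eps\,s^{2^*-1}\ln\ln(\e+s)$ and $C\eps\,s^{2^*-2}(1+\ln\ln(\e+s))$, the maximum principle to replace $PU_{\delta,\xi}$ by $U_{\delta,\xi}$, and the observation that the iterated logarithm is bounded by $\ln\ln(\e+\alpha_N\delta^{-\frac{N-2}{2}})=\mathcal O(\ln|\ln\delta|)$ on the range of the bubble. The only cosmetic difference is that you rederive the pointwise bounds directly via $|e^{-x}-1|\le x$ rather than citing the paper's Lemma~\ref{lem:meanvalue}, and you absorb the $\ln\ln$ factor pointwise before integrating instead of first rescaling to $\Omega_\delta$; both yield $|U_{\delta,\xi}^{2^*}|_1=\mathcal O(1)$ as the remaining integral, exactly as in the paper.
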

\begin{proof}
In the following $C>0$ denotes a positive constant, independent of $\delta$, $\eps$, and $\xi\in(0,1)$, not necessarily the same one. We show first ~\eqref{s11}. By 
Lemma~\ref{lem:meanvalue}  and the maximum principle,
\begin{align*}
|f_\eps(PU_{\delta,\xi})-f_0(PU_{\delta,\xi})| &\leq\eps (PU_{\delta,\xi})^{2^*-1}\ln\ln(\e+PU_{\delta,\xi}) \\
&\leq \eps\, U_{\delta,\xi}^{2^*-1}\ln\ln(\e+U_{\delta,\xi}).
\end{align*}
Next, we scale $x-\xi=\delta y,$  $y\in \Omega_\delta:=\frac{1}{\delta}(\Omega-\xi)$ and we get, for $\delta\in(0,1)$,
\begin{align*}
&\left(\io \left|U_{\delta,\xi}^{2^*-1}(x)\ln\ln(\e+U_{\delta,\xi}(x))\right|^\frac{2^*}{2^*-1}\d x\right)^\frac{2^*-1}{2^*} \\
&\qquad \leq\left(\int_{\Omega_\delta} U^{2^*}(y)\left|\ln\ln\left(\e+ \delta^{-\frac{N-2}{2}} U(y)\right)\right|^\frac{2^*}{2^*-1}\d y\right)^\frac{2^*-1}{2^*}\\ 
&\qquad\leq C\,
\left|\ln\ln\left(\e+ \delta^{-\frac{N-2}{2}}\al_N \right)\right| \le C \ln|\ln\delta|
\end{align*}
and~\eqref{s11} follows.

Now we show~\eqref{s12}. By Lemma~\ref{lem:meanvalue}, for $\eps$ small enough we have that
$$|f'_{\eps}(u)-f'_0(u)| \leq  C \eps |u|^{2^*-2}\big( \ln \ln(\e+|u|) + 1\big).$$
Next, we scale $x-\xi=\delta y,$  $y\in \Omega_\delta:=\frac{1}{\delta}(\Omega-\xi)$ and then, for $\delta\in(0,\frac{1}{2})$,
\begin{align*}
&\left(\io |U_{\delta,\xi}^{2^*-2}\left(\ln \ln(\e+U_{\delta,\xi}) + 1\right) |^\frac{2^*}{2^*-2}\right)^\frac{2^*-2}{2^*}\\
&\qquad=\left(\int_{\Omega_\delta} U^{2^*}(y)\left(\ln \ln(\e+\delta^{-\frac{N-2}{2}}U(y)) + 1\right)^\frac{2^*}{2^*-2}\d y\right)^\frac{2^*-2}{2^*} \\
&\qquad\leq C
\left(\ln \ln(\e+\delta^{-\frac{N-2}{2}}\al_N) + 1\right)   \leq C
\ln|\ln\delta|
\end{align*}
and~\eqref{s12} follows.

\end{proof}

\begin{proposition}\label{prop:fixed_point}

For any compact subset $X$ of \,$(0,\infty)\times\Omega$ there is $\delta_0(X)=\delta_0>0$ such that, for every $(d,\xi)\in X$ and $\delta\in(0,\delta_0)$, there exists a unique $\phi=\phi_{\delta,\xi}\in K_{\delta,\xi}^\perp$ which solves equation~\eqref{eq:perp} with $\eps=d\,\delta^{N-2}|\ln\delta|$ and satisfies
\begin{equation}\label{phibd}
\|\phi_{\delta,\xi}\| =
\begin{cases}
\mathcal O\left(\delta^{N-2}\,|\ln\delta|\, (\ln|\ln\delta|) \right)\ &\hbox{if}\ 3\le N\le 6,\\
\mathcal O\left(  \delta^{N+2\over2}\right)\ &\hbox{if}\ N\ge 7.
\end{cases}
\end{equation}
\end{proposition}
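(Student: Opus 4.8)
The plan is to solve the equation~\eqref{eq:perp} via a contraction mapping argument in the ball $B_\rho:=\{\phi\in K_{\delta,\xi}^\perp:\|\phi\|\le\rho\}$, where $\rho$ is the right-hand side of~\eqref{phibd}. First I would rewrite~\eqref{eq:perp} as a fixed-point problem for $\phi\in K_{\delta,\xi}^\perp$: using the operator $L_{\delta,\xi}$ from Proposition~\ref{prop:invertible}, equation~\eqref{eq:perp} is equivalent to
$$
L_{\delta,\xi}(\phi)=\Pi_{\delta,\xi}^\perp i^*\big[f_\eps(PU_{\delta,\xi}+\phi)-f_0(U_{\delta,\xi})-f_0'(U_{\delta,\xi})\phi\big]+\Pi_{\delta,\xi}^\perp i^*\big[f_0(U_{\delta,\xi})-PU_{\delta,\xi}\big],
$$
and, since $-\Delta U_{\delta,\xi}=f_0(U_{\delta,\xi})$ gives $i^*[f_0(U_{\delta,\xi})]=PU_{\delta,\xi}$ modulo the part of $PU_{\delta,\xi}$ in $K_{\delta,\xi}$, the last term is $-\Pi_{\delta,\xi}^\perp(PU_{\delta,\xi}-i^*[f_0(U_{\delta,\xi})])=0$ up to a controlled error; more honestly, the ``error term'' to estimate is $\mathcal E_{\delta,\xi}:=\Pi_{\delta,\xi}^\perp i^*[f_0(PU_{\delta,\xi})-f_0(U_{\delta,\xi})]+\Pi_{\delta,\xi}^\perp i^*[f_\eps(PU_{\delta,\xi})-f_0(PU_{\delta,\xi})]$, together with a remainder coming from replacing $f_0(PU_{\delta,\xi})$ by $f_0(U_{\delta,\xi})+f_0'(U_{\delta,\xi})(PU_{\delta,\xi}-U_{\delta,\xi})$. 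Since $L_{\delta,\xi}$ is invertible with norm bounded below by a constant independent of $(\delta,\xi)$ in the relevant range, the fixed-point map $\phi\mapsto L_{\delta,\xi}^{-1}\big(\mathcal E_{\delta,\xi}+\mathcal N_{\delta,\xi}(\phi)\big)$ is well defined, where $\mathcal N_{\delta,\xi}(\phi):=\Pi_{\delta,\xi}^\perp i^*[f_\eps(PU_{\delta,\xi}+\phi)-f_\eps(PU_{\delta,\xi})-f_0'(U_{\delta,\xi})\phi]$ collects the genuinely nonlinear part.

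The key estimates are then three. \emph{(i) The error term.} By~\eqref{istar}, $\|\mathcal E_{\delta,\xi}\|\le c\,|f_0(PU_{\delta,\xi})-f_0(U_{\delta,\xi})|_{2N/(N+2)}+c\,|f_\eps(PU_{\delta,\xi})-f_0(PU_{\delta,\xi})|_{2N/(N+2)}$, which by~\eqref{s1} and~\eqref{s11} is $\mathcal O(\delta^{N-2})+\mathcal O(\delta^4|\ln\delta|^{2/3})+\mathcal O(\delta^{(N+2)/2})+\mathcal O(\eps\ln|\ln\delta|)$; substituting $\eps=d\,\delta^{N-2}|\ln\delta|$, the last term becomes $\mathcal O(\delta^{N-2}|\ln\delta|\ln|\ln\delta|)$, which dominates the pure-$\delta^{N-2}$ contribution and gives exactly the claimed rates: $\mathcal O(\delta^{N-2}|\ln\delta|\ln|\ln\delta|)$ for $3\le N\le 6$ and $\mathcal O(\delta^{(N+2)/2})$ for $N\ge 7$ (one checks that for $N\ge 7$ the term $\delta^{(N+2)/2}$ wins over $\delta^{N-2}|\ln\delta|\ln|\ln\delta|$). \emph{(ii) The linearization remainder.} The difference between using $f_\eps$ and the exact linearization $f_0'(U_{\delta,\xi})\phi$ at $\phi=0$ is controlled by $|f_0(PU_{\delta,\xi})-f_0(U_{\delta,\xi})-f_0'(U_{\delta,\xi})(PU_{\delta,\xi}-U_{\delta,\xi})|$ and $|f_\eps'(PU_{\delta,\xi})-f_0'(PU_{\delta,\xi})|$ and $|f_0'(PU_{\delta,\xi})-f_0'(U_{\delta,\xi})|$; by~\eqref{f0:f'0}, \eqref{s12} and~\eqref{s2} these are absorbed into the error bound above (the $|\ln\delta|^{1/2}$ and $\ln|\ln\delta|$ factors are lower order than the $|\ln\delta|\ln|\ln\delta|$ already present). \emph{(iii) Contraction.} For $\phi_1,\phi_2\in B_\rho$ one estimates $\|\mathcal N_{\delta,\xi}(\phi_1)-\mathcal N_{\delta,\xi}(\phi_2)\|\le c\,|f_\eps'(PU_{\delta,\xi}+t\phi_1+(1-t)\phi_2)-f_0'(U_{\delta,\xi})|_{N/2}\,\|\phi_1-\phi_2\|$ for some $t\in(0,1)$ by the mean value theorem and Hölder, and the sup-norm gap $\|\phi_i\|\to0$ together with~\eqref{s2}, \eqref{s12}, and the continuity of $f_0'$ forces the Lipschitz constant to be $o(1)$; similarly $\|\mathcal N_{\delta,\xi}(\phi)\|=o(\|\phi\|)+o(\rho)$, so the map sends $B_\rho$ into itself and is a contraction. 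The Banach fixed-point theorem then yields the unique $\phi_{\delta,\xi}$, and taking $\phi_2=0$ in the contraction estimate together with $\|\phi_{\delta,\xi}\|\le 2\|L_{\delta,\xi}^{-1}\mathcal E_{\delta,\xi}\|\le C\|\mathcal E_{\delta,\xi}\|$ gives~\eqref{phibd}.

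I would also remark that the passage $\eps=d\,\delta^{N-2}|\ln\delta|$ versus $\delta=(d\eps/|\ln\eps|)^{1/(N-2)}$ is reconciled by Remark~\ref{eps:rate}, so the statement may equivalently be phrased with $\eps$ small and $\delta=\delta(d,\eps)$; this is why the hypothesis is stated for $\delta\in(0,\delta_0)$ with the compact constraint on $(d,\xi)$, which keeps $\kappa_{\eps,d}$ bounded away from $0$ and $\infty$ and hence all constants uniform.

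The main obstacle I expect is \emph{(i)}, specifically making sure the $\ln|\ln\delta|$ factor coming from the non-power nonlinearity~\eqref{flog} — which is the essential novelty relative to the power-nonlinearity case — is tracked correctly through the $L^{2N/(N+2)}$ estimate and combined with $\eps=d\,\delta^{N-2}|\ln\delta|$ to produce the stated dichotomy in $N$; everything else is the standard Ljapunov-Schmidt machinery (cf.~\cite{mp_2002,p}), but one must be careful that the error term $\mathcal E_{\delta,\xi}$, not the contraction constant, is what sets the size of $\phi_{\delta,\xi}$, and that for $N\ge 7$ the geometric term $\delta^{(N+2)/2}$ from~\eqref{s1} genuinely dominates.
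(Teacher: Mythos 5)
Your proposal is correct and follows essentially the same route as the paper: rewrite~\eqref{eq:perp} as a fixed-point equation using the invertibility of $L_{\delta,\xi}$, split the right-hand side into a $\phi$-independent ``error'' part and a $\phi$-dependent ``remainder'' part, estimate both via~\eqref{istar}, Lemmas~\ref{k1} and~\ref{k2}, and close with Banach's fixed-point theorem in a ball of radius comparable to the error. The only cosmetic remark is that your preliminary reformulation contains the ill-typed expression $\Pi_{\delta,\xi}^\perp i^*[f_0(U_{\delta,\xi})-PU_{\delta,\xi}]$ (you mean $\Pi_{\delta,\xi}^\perp\bigl(i^*[f_0(U_{\delta,\xi})]-PU_{\delta,\xi}\bigr)$, which vanishes identically since $i^*[f_0(U_{\delta,\xi})]=PU_{\delta,\xi}$), but you correct this in the next clause, and your identification of $\eps\ln|\ln\delta|=\mathcal O(\delta^{N-2}|\ln\delta|\ln|\ln\delta|)$ as the dominant error for $3\le N\le 6$ versus $\delta^{(N+2)/2}$ for $N\ge 7$ matches the paper's $R_\delta$.
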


\begin{proof}

Let $(d,\xi)\in X$, $\delta\in(0,1)$. Note that $\phi\in K_{\delta,\xi}^\perp$ solves~\eqref{eq:perp} if and only if $\phi$ is a fixed point of the  operator  $T_{\delta,\xi}:K_{\delta,\xi}^\perp\to K_{\delta,\xi}^\perp$ defined by
\begin{align*}
T_{\delta,\xi}(\phi):= L_{\delta,\xi}^{-1}\,\Pi_{\delta,\xi}^\perp\, i^*  \Big\{ &\left[f_\eps(PU_{\delta,\xi}+ \phi)-f_\eps(PU_{\delta,\xi})-f'_\eps(PU_{\delta,\xi})\phi\right]\\
&+
\left[ f'_\eps(PU_{\delta,\xi})-f'_0(PU_{\delta,\xi})\right]\phi+\left[ f'_0(PU_{\delta,\xi})-f'_0(U_{\delta,\xi})\right]\phi\\
&+
\left[ f_\eps(PU_{\delta,\xi})-f_0(PU_{\delta,\xi})\right] +\left[ f_0(PU_{\delta,\xi})-f_0(U_{\delta,\xi})\right] \ \Big\}.
\end{align*}

We shall prove that $T_{\delta,\xi}$ is a contraction in a suitable ball.  
Hereafter $C>0$ denotes a positive constant, independent of $(d,\xi)\in X$ and $\delta\in(0,1)$, not necessarily the same one. Proposition~\ref{prop:invertible} and Sobolev's inequality yield
\begin{align*}
\|T_{\delta,\xi}(\phi)\|  &\leq   C\big|f_\eps(PU_{\delta,\xi}+ \phi) -f_\eps(PU_{\delta,\xi}) - f'_\eps(PU_{\delta,\xi})\phi\big|_\frac{2^*}{2^*-1} \\
&\qquad +C \big|\big(f'_\eps(PU_{\delta,\xi})-f'_0(PU_{\delta,\xi})\big)
\phi\big|_\frac{2^*}{2^*-1}+ C\big|\big(f'_0(PU_{\delta,\xi})-f'_0(U_{\delta,\xi})\big)
\phi\big|_\frac{2^*}{2^*-1}\\
&\qquad   + C|f_\eps(PU_{\delta,\xi}) - f_0(PU_{\delta,\xi}) |_\frac{2^*}{2^*-1} +C|f_0(PU_{\delta,\xi}) - f_0(U_{\delta,\xi}) |_\frac{2^*}{2^*-1} \\
&=:A_1+A_2+A_3+A_4+A_5.
\end{align*}
By~\eqref{s1} and~\eqref{s11},
$$ 
|A_4|_{{2N\over N+2}}
+|A_5|_{{2N\over N+2}}\le
\mathcal O(R_\delta),$$
where 
\begin{align*}
R_\delta=
\begin{cases} 
\delta^{N-2}\,|\ln\delta|\, \ln|\ln\delta|  \ &\hbox{if}\ 3\le N\le 6,\\
\delta^{N+2\over2} \ &\hbox{if}\ N\ge 7.\end{cases}
\end{align*}
Next, we estimate the other terms.

$A_1)$ By the mean value theorem, there exists $\theta=\theta(x)\in(0,1)$ such that
\begin{align}
A_1&=\big|f_\eps(PU_{\delta,\xi} + \phi) - f_\eps(PU_{\delta,\xi}) - f'_\eps(PU_{\delta,\xi})\phi\big|_\frac{2^*}{2^*-1} \label{A1}\\
&=\big|\big(f'_\eps(PU_{\delta,\xi} + \theta\phi) -  f'_\eps(PU_{\delta,\xi})\big)\phi\big|_\frac{2^*}{2^*-1}.\notag
\end{align}
If $N< 6$, from Lemma~\ref{lem:meanvalue} and Hölder's inequality, 
\begin{align*}
A_1&\leq  C\,\big(\big| |\phi|^{2^*-1}\big|_\frac{2^*}{2^*-1}+\big|U_{\delta,\xi}^{2^*-3}\phi^2\big|_\frac{2^*}{2^*-1}
\big) 
= C\left[|\phi|_{2^*}^{2^*-1}+\left(\io \left(U_{\delta,\xi}^{2^*-3}\phi^2\right)^\frac{2^*}{2^*-1}\right)^\frac{2^*-1}{2^*} \right]\\
&\leq C\left(|\phi|_{2^*}^{2^*-1}+
|U_{\delta,\xi}|_{2^*}^{2^*-3}|\phi|_{2^*}^2  \right),
\end{align*}
while if $N=6$,
\begin{align*}
A_1&\leq  C\,\big(\big| |\phi|^{2^*-1}\big|_\frac{2^*}{2^*-1}+\big|\phi^2\big|_\frac{2^*}{2^*-1}\big) = C\left[|\phi|_{2^*}^{2^*-1}+\left(\io |\phi|^{2^*} \right)^\frac{2^*-1}{2^*} \right]= 2C\,|\phi|_{2^*}^{2^*-1}\, .
\end{align*}
On the other hand, if $N>6$, we obtain         
\begin{align*}
A_1&\leq  C\,\big(\big| |\phi|^{2^*-1}\big|_\frac{2^*}{2^*-1}
+\eps \, \big| U_{\delta,\xi}^{2^*-2}\phi\big|_\frac{2^*}{2^*-1}
\big) = C\left[|\phi|_{2^*}^{2^*-1} +\eps
\left(\io \left(U_{\delta,\xi}^{2^*-2}|\phi|\right)^\frac{2^*}{2^*-1}\right)^\frac{2^*-1}{2^*}\right]\\
&\leq C\left(|\phi|_{2^*}^{2^*-1}+
\eps |U_{\delta,\xi}|_{2^*}^{2^*-2}|\phi|_{2^*} \right).
\end{align*}
Now, Sobolev's inequality gives
\begin{equation}\label{A1:2}
A_1\leq
\begin{cases}
C\,\big(1+\|\phi\|^{2^*-3}  \big)\,\|\phi\|^2\ &\text{if}\ 3\le N\le 5,\\
C\,\|\phi\|^2 \ &\text{if}\  N=6,\\
C\,\big(\eps +\|\phi\|^{2^*-2} \big)\,\|\phi\|\ &\text{if}\  N\ge 7.
\end{cases}
\end{equation}

\medskip

$A_2)$ By Holder's inequality and ~\eqref{s12},
$$\big|\big(f'_\eps(PU_{\delta,\xi})-f'_0(PU_{\delta,\xi})\big)
\phi\big|_\frac{2^*}{2^*-1}\le \big|f'_\eps(PU_{\delta,\xi})-f'_0(PU_{\delta,\xi})
\big|_\frac{N}{2 } |\phi|_{2^*}\le C\eps\ln|\ln\delta|\ \|\phi\|.$$

\medskip

$A_3)$ By Holder's inequality and ~\eqref{s2},
\begin{align*}
\big|\big(f'_0(PU_{\delta,\xi})-f'_0(U_{\delta,\xi})\big)
\phi\big|_\frac{2^*}{2^*-1}\le \big|f'_0(PU_{\delta,\xi})-f'_0(U_{\delta,\xi})
\big|_\frac{N}{2 }|\phi|_{2^*}\le 
\begin{cases}
C\delta\, \|\phi\|  &\hbox{if}\ N=3,\\
C\delta^2|\ln\delta|^{1/2}\|\phi\| &\hbox{if}\ N=4, \\
C\delta^{2}\|\phi\| &\hbox{if}\ N\ge 5.\\
\end{cases}
\end{align*}

\medskip

Collecting all the previous estimates, we deduce that there exist  $R^*>0$ and $\delta_0>0$ such that   for any $\delta\in(0,\delta_0)$ 
\begin{equation*}
\|T_{\delta,\xi}(\phi)\| \leq      R^* R_\delta\ \hbox{for any}\ \phi\in B_\delta:=\{\phi\in K_{\delta,\xi}^\perp:\|\phi\|\leq   R ^*  R_\delta   \}. 
\end{equation*}

Next, we show that $T_{\delta,\xi}$ is a contraction. To this end, let $\phi_1,\phi_2\in B_\delta$.
We have
\begin{align*}
\|T_{\delta,\xi}(\phi_1)-T_{\delta,\xi}(\phi_2)\| \le
& C\, |f_\eps(PU_{\delta,\xi}+ \phi_1)-f_\eps(PU_{\delta,\xi}+ \phi_2)-f'_\eps(PU_{\delta,\xi})(\phi_1-\phi_2)|_\frac{2^*}{2^*-1}\\
&+C
\left|[ f'_\eps(PU_{\delta,\xi})-f'_0(PU_{\delta,\xi})](\phi_1-\phi_2)\right|_\frac{2^*}{2^*-1}\\
&+C\left|[ f'_0(PU_{\delta,\xi})-f'_0(U_{\delta,\xi})](\phi_1-\phi_2)\right|_\frac{2^*}{2^*-1}=:a_1+a_2+a_3.
\end{align*}
To estimate $a_1$, $a_2$, and $a_3$ we argue as we did above for $A_1$, $A_2$, and $A_3$.

$a_1$) By the mean value theorem, there exists  $\theta=\theta(x)\in(0,1)$ such that
\begin{align*}
a_1&=|f_\eps(PU_{\delta,\xi}+ \phi_1)-f_\eps(PU_{\delta,\xi}+ \phi_2)-f'_\eps(PU_{\delta,\xi})(\phi_1-\phi_2)|_\frac{2^*}{2^*-1}\\
& = \big|\big(f'_\eps(PU_{\delta,\xi} + \phi_\theta)  -f'_\eps(PU_{\delta,\xi}) \big) (\phi_2-\phi_1)\big|_\frac{2^*}{2^*-1},
\end{align*}
where $\phi_\theta:=(1-\theta)\phi_1+\theta\phi_2$.

If $N< 6$, from Lemma~\ref{lem:meanvalue} and Hölder's inequality we get 
\begin{align*}
a_1&\leq  C\,\Big(\big| |\phi_\theta|^{2^*-2} (\phi_2-\phi_1)\big|_\frac{2^*}{2^*-1}+\big|U_{\delta,\xi}^{2^*-3}\phi_\theta(\phi_2-\phi_1)\big|_\frac{2^*}{2^*-1}\Big) \\
&\le C\left[|\phi_\theta|_{2^*}^{2^*-2}|\phi_2-\phi_1|_{2^*}+\left(\io \left(U_{\delta,\xi}^{2^*-3}\phi_\theta(\phi_2-\phi_1)\right)^\frac{2^*}{2^*-1}\right)^\frac{2^*-1}{2^*} \right]\\
&\leq C\left(|\phi_\theta|_{2^*}^{2^*-2}+|U_{\delta,\xi}|_{2^*}^{2^*-3}|\phi_\theta|_{2^*}  \right)\,|\phi_2-\phi_1|_{2^*},
\end{align*}
while, if $N= 6$, we obtain 
\begin{align*}
a_1&\leq  C\, |\phi_\theta|_{2^*}\,|\phi_2-\phi_1|_{2^*}  .
\end{align*}
On the other hand, if $N>6$,
\begin{align*}
a_1&\leq  C\,\big(\big| |\phi_\theta|^{2^*-2}(\phi_2-\phi_1)\big|_\frac{2^*}{2^*-1}
+\eps \, \big| U_{\delta,\xi}^{2^*-2}(\phi_2-\phi_1)\big|_\frac{2^*}{2^*-1}
\big) \\
&\le C\left[|\phi_\theta|_{2^*}^{2^*-2}\,|\phi_2-\phi_1|_{2^*} +\eps
\left(\io \left(U_{\delta,\xi}^{2^*-2}|\phi_2-\phi_1|\right)^\frac{2^*}{2^*-1}\right)^\frac{2^*-1}{2^*}\right]\\
&\leq C\left(|\phi_\theta|_{2^*}^{2^*-2}+
\eps  \right)\,|\phi_2-\phi_1|_{2^*}.
\end{align*}
Now, Sobolev's inequality gives
\begin{equation*}
a_1\leq
C\,\Big(\|\phi_\theta\|^{2^*-2} + \max\{\|\phi_\theta\|,\eps\} \Big)\,\|\phi_2-\phi_1\|.
\end{equation*}

$a_2$) By Holder's inequality and ~\eqref{s12},
\begin{align*}
a_2&=\big|\big(f'_\eps(PU_{\delta,\xi})-f'_0(PU_{\delta,\xi})\big)
(\phi_2-\phi_1)\big|_\frac{2^*}{2^*-1}\\
&\le \big|f'_\eps(PU_{\delta,\xi})-f'_0(PU_{\delta,\xi})
\big|_\frac{N}{2 }|\phi_2-\phi_1|_{2^*}\le C\eps\ln|\ln\delta|\|\phi_2-\phi_1\|.
\end{align*}

$a_3$) By Holder's inequality and ~\eqref{s2},
\begin{align*}
a_3&=\big|\big(f'_0(PU_{\delta,\xi})-f'_0(U_{\delta,\xi})\big)
(\phi_2-\phi_1)\big|_\frac{2^*}{2^*-1}\\
&\le \big|f'_0(PU_{\delta,\xi})-f'_0(U_{\delta,\xi})
\big|_\frac{N}{2 }|\phi_2-\phi_1|_{2^*}\le 
\begin{cases}
C\delta\, \|\phi_2-\phi_1\|  &\hbox{if}\ N=3,\\
C\delta^2|\ln\delta|^{1/2}\|\phi_2-\phi_1\| &\hbox{if}\ N=4, \\
C\delta^{2}\|\phi_2-\phi_1\| &\hbox{if}\ N\ge 5.\\
\end{cases}
\end{align*}

From the above estimates we conclude that
\begin{align*}
\|T_{\delta,\xi}(\phi_2)-T_{\delta,\xi}(\phi_1)\|
&\leq
C\Big(R^* R_\delta +(R^* R_\delta)^{2^*-2} +\eps\ln|\ln\delta|+\de\Big)\|\phi_2-\phi_1\|. 
\end{align*}
Hence, there exists $\delta_0\in(0,1)$ such that $T_{\delta,\xi}:B_\delta\to B_\delta$ is a contraction for all $\delta\in(0,\delta_0)$. By Banach's fixed point theorem, $T_{\delta,\xi}:B_\delta\to B_\delta$ has a unique fixed point, as claimed.
\end{proof}

\section{The finite dimensional problem}\label{sec:fdp}

In the previous section we proved that,  if $\eps=d\, {\delta^{N-2}|\ln\delta|}$ is small enough, then, for each $d>0$   and $\xi\in\Omega$, there exists a unique $\phi=\phi_{\delta,\xi}\in K_{\delta,\xi}^\perp$ which solves equation~\eqref{eq:perp}, \textit{i.e.},
$$PU_{\delta,\xi} + \phi-i^*[f_\eps(PU_{\delta,\xi} + \phi)]\in K_{\delta,\xi}.$$
Hence, there exist $c_{\delta,\xi}^0,c_{\delta,\xi}^1,\ldots,c_{\delta,\xi}^N\in\r$ such that
\begin{equation*}
PU_{\delta,\xi} + \phi-i^*[f_\eps(PU_{\delta,\xi} + \phi)]=\sum_{i=0}^Nc_{\delta,\xi}^iP\psi^{i}_{\delta,\xi}.
\end{equation*}
In order to show that   $PU_{\delta,\xi} + \phi$ solves~\eqref{prob:inverse}, we need to prove that it solves~\eqref{eq:kern}. That is, we need to show that there exists $d_\eps>0$ and $\xi_\eps\in \Omega$ such that the $c_{\delta_\eps,\xi_\eps}^i$'s are zero  for $\eps$ small enough.

\begin{proposition}\label{main:prop} Let $\xi_0\in\Omega$ be a non-degenerate critical point of the Robin function $\varrho_\Omega$. Then there exist  $\xi_\eps\to\xi_0$ and $\delta_\eps\to 0$ given by $\eps= d_\eps \delta_\eps^{N-2}|\ln\delta_\eps|$ with $d_\eps\to d_0>0$ and
such that 
\begin{align*}
PU_{\delta_\eps,\xi_\eps} + \phi-i^*[f_\eps(PU_{\delta_\eps,\xi_\eps} + \phi)]=0,
\end{align*}
where $\phi=\phi_{\delta_\eps,\xi_\eps}$ is given by Proposition~\ref{prop:fixed_point}.
\end{proposition}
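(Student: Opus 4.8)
The plan is to recast \eqref{eq:kern} as a finite-dimensional \emph{variational} problem and to solve it near the point $(d_0,\xi_0)$ singled out by the expansion of the reduced energy. Let $J_\eps\colon H^1_0(\Omega)\to\R$, $J_\eps(u):=\tfrac12\io|\nabla u|^2-\io F_\eps(u)$ with $F_\eps(u):=\int_0^{u}f_\eps(t)\,\d t$, be the energy functional of \eqref{Plog}, so that $J_\eps'(u)=u-i^*[f_\eps(u)]$ in $H^1_0(\Omega)$ and \eqref{prob:inverse} reads $J_\eps'(u)=0$. Fix a compact subset $X\subset(0,\infty)\times\Omega$; for $\eps$ small, $(d,\xi)\in X$, and $\delta=\delta(d,\eps)$ the unique solution of $\eps=d\,\delta^{N-2}|\ln\delta|$ (which exists and satisfies $\delta\sim(d\,\eps/|\ln\eps|)^{1/(N-2)}$ by Remark~\ref{eps:rate}), put $u_{d,\xi}:=PU_{\delta,\xi}+\phi_{\delta,\xi}$ with $\phi_{\delta,\xi}$ from Proposition~\ref{prop:fixed_point}, and set $\wt J_\eps(d,\xi):=J_\eps(u_{d,\xi})$. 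First I would record the standard fact that this is a genuine reduction: since $u_{d,\xi}$ solves \eqref{eq:perp} we have $J_\eps'(u_{d,\xi})=\sum_{i=0}^{N}c^i_{\delta,\xi}\,P\psi^i_{\delta,\xi}$; differentiating $\wt J_\eps$ in $d$ and in $\xi_1,\dots,\xi_N$ and using $\partial_{\xi_k}PU_{\delta,\xi}=\tfrac1\delta P\psi^k_{\delta,\xi}$, $\partial_\delta PU_{\delta,\xi}=\tfrac1\delta P\psi^0_{\delta,\xi}$, the orthogonality $\langle\phi_{\delta,\xi},P\psi^j_{\delta,\xi}\rangle=0$ (so that the derivatives of $\phi_{\delta,\xi}$ enter only through terms of size $\mathcal O(\|\phi_{\delta,\xi}\|)$), and the invertibility of the Gram matrix $\big(\langle P\psi^i_{\delta,\xi},P\psi^j_{\delta,\xi}\rangle\big)_{i,j}$ from \eqref{l3}, one gets that $\nabla_{(d,\xi)}\wt J_\eps(d,\xi)=0$ forces $c^0_{\delta,\xi}=\dots=c^N_{\delta,\xi}=0$, i.e.\ $u_{d,\xi}$ solves \eqref{prob:inverse}.

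Next I would expand $\wt J_\eps$, $C^1$-uniformly on $X$. Using Lemma~\ref{lem:rey}, Lemma~\ref{lem:U:q}, the estimates of the appendix, the bound \eqref{phibd} (which makes $J_\eps(u_{d,\xi})-J_\eps(PU_{\delta,\xi})$ of strictly higher order), and the classical expansion of $J_0(PU_{\delta,\xi})$, one obtains
\[
\wt J_\eps(d,\xi)=\tfrac1N S^{N/2}+b_N\,\varrho(\xi)\,\delta^{N-2}+\io\big(F_0-F_\eps\big)(U_{\delta,\xi})\,\d x+o\big(\delta^{N-2}\big),
\]
with $b_N>0$ the usual positive constant. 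Since $[\ln(\e+t)]^{-\eps}=1-\eps\ln\ln(\e+t)+\mathcal O\big(\eps^2(\ln\ln(\e+t))^2\big)$ and $\ln\ln(\e+U_{\delta,\xi})=\ln|\ln\delta|+\mathcal O(1)$ on the bulk of the bubble, a change of variables $x=\delta y+\xi$ and an integration by parts (handling the part of $\ln\ln$ away from the peak of the bubble) give $\io(F_0-F_\eps)(U_{\delta,\xi})\,\d x=\tfrac{S^{N/2}}{2^*}\,\eps\,\ln|\ln\delta|+r(\eps)+o\big(\eps/|\ln\eps|\big)$, where $r(\eps)$ does not depend on $(d,\xi)$. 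Finally I would pass to the variable $\eps$ by inverting $\eps=d\,\delta^{N-2}|\ln\delta|$: this yields $\delta^{N-2}=(N-2)\,\dfrac{\eps}{d\,|\ln\eps|}\,(1+o(1))$ and $\eps\ln|\ln\delta|=\dfrac{\eps}{|\ln\eps|}\,\ln d+\big(\text{a function of }\eps\text{ only}\big)+o\big(\eps/|\ln\eps|\big)$. Collecting every $(d,\xi)$-independent contribution into a single constant $E_\eps$, and writing $\Psi(d,\xi):=a\ln d+b\,\varrho(\xi)/d$ with $a:=S^{N/2}/2^*>0$ and $b:=(N-2)\,b_N>0$, this gives
\[
\wt J_\eps(d,\xi)=E_\eps+\frac{\eps}{|\ln\eps|}\,\big(\Psi(d,\xi)+o(1)\big)\qquad\text{in }C^1_{\mathrm{loc}}\big((0,\infty)\times\Omega\big).
\]

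It then remains to analyze $\Psi$ and to conclude by a degree argument. One has $\nabla_\xi\Psi=\tfrac bd\nabla\varrho(\xi)$ and $\partial_d\Psi=\tfrac ad-\tfrac{b\,\varrho(\xi)}{d^2}$, so the critical points of $\Psi$ are exactly the pairs with $\nabla\varrho(\xi)=0$ and $d=\tfrac ba\varrho(\xi)$; since $\varrho>0$, the given non-degenerate critical point $\xi_0$ of the Robin function furnishes the critical point $(d_0,\xi_0)$ with $d_0:=\tfrac ba\varrho(\xi_0)>0$. Its Hessian is block-diagonal, the $d$-block being $\tfrac{a^3}{b^2\varrho(\xi_0)^2}>0$ and the $\xi$-block being $\tfrac b{d_0}D^2\varrho(\xi_0)$, which is invertible precisely because $\xi_0$ is non-degenerate; hence $(d_0,\xi_0)$ is a non-degenerate, in particular isolated, critical point of $\Psi$, and $\deg\big(\nabla\Psi,B_r(d_0,\xi_0),0\big)=\operatorname{sgn}\det D^2\Psi(d_0,\xi_0)\neq0$ for every small $r>0$. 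Taking $X$ to be a compact neighbourhood of $(d_0,\xi_0)$ and using the $C^1_{\mathrm{loc}}$-convergence above, $\tfrac{|\ln\eps|}{\eps}\nabla_{(d,\xi)}\wt J_\eps\to\nabla\Psi$ uniformly on $\overline{B_r(d_0,\xi_0)}$, while $\nabla\Psi$ does not vanish on $\partial B_r(d_0,\xi_0)$; hence, for $\eps$ small, $\deg\big(\nabla_{(d,\xi)}\wt J_\eps,B_r(d_0,\xi_0),0\big)\neq0$ by homotopy invariance, so $\wt J_\eps$ admits a critical point $(d_\eps,\xi_\eps)\in B_r(d_0,\xi_0)$, and letting $r\downarrow0$ gives $(d_\eps,\xi_\eps)\to(d_0,\xi_0)$. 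By the first step, $u_{d_\eps,\xi_\eps}=PU_{\delta_\eps,\xi_\eps}+\phi_{\delta_\eps,\xi_\eps}$ (with $\delta_\eps:=\delta(d_\eps,\eps)$) solves \eqref{prob:inverse}, and $\delta_\eps\to0$ because $\eps=d_\eps\,\delta_\eps^{N-2}|\ln\delta_\eps|\to0$ with $d_\eps\to d_0>0$; this is the assertion, with $d_0=\tfrac ba\varrho(\xi_0)$.

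The main obstacle is the $C^1$-uniform expansion of $\wt J_\eps$. On the one hand it requires a Lipschitz/differentiability refinement of Proposition~\ref{prop:fixed_point}, controlling $\partial_d\phi_{\delta,\xi}$ and $\partial_{\xi_k}\phi_{\delta,\xi}$ (obtained by differentiating the fixed-point identity $\phi=T_{\delta,\xi}(\phi)$ and invoking Proposition~\ref{prop:invertible}). On the other, and more delicately, one must isolate the term of order $\eps/|\ln\eps|$ while the leading non-power contribution is of the larger order $\eps\ln|\ln\eps|$: this forces one to expand the nested logarithm $\ln|\ln\delta|$ to sub-sub-leading order, to keep careful track of the coefficient of $\ln d$, and to check that every remainder is genuinely $o(\eps/|\ln\eps|)$ uniformly in $(d,\xi)$ --- estimates that rest on the auxiliary bounds of the appendix.
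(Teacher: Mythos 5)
Your proposal is correct in its overall strategy, but it implements the \emph{variational} form of the Ljapunov--Schmidt reduction (via the reduced energy $\wt J_\eps(d,\xi)=J_\eps(PU_{\delta,\xi}+\phi_{\delta,\xi})$), whereas the paper implements the \emph{direct} form: it projects the residual $PU_{\delta,\xi}+\phi-i^*[f_\eps(PU_{\delta,\xi}+\phi)]$ onto each $P\psi^j_{\delta,\xi}$ in \eqref{eq:c_i} and estimates the eight resulting contributions $I_1,\dots,I_8$ one by one. The underlying computation is the same: $I_1$ carries the Robin term $\varrho(\xi)\,\delta^{N-2}$, $I_3$ (the appendix Lemma~\ref{I3}) carries the non-power contribution $\eps\ln|\ln\delta|$, and after inverting $\eps=d\,\delta^{N-2}|\ln\delta|$ the paper's limit map $F(d,\xi)=\big(\mathfrak A_1\varrho(\xi)-\mathfrak A_2 d,\ \mathfrak A_3\nabla\varrho(\xi)\big)$ is precisely the gradient of your $\Psi(d,\xi)=a\ln d+b\,\varrho(\xi)/d$ after the harmless rescaling of $\partial_d\Psi$ by $d^2>0$. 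Both routes close with the same Brouwer degree argument, exploiting non-degeneracy of $\xi_0$ to get an isolated zero of nonzero index, and both recover $d_0$ proportional to $\varrho(\xi_0)$.

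What the energy route buys is conceptual economy and, as a byproduct, the Morse index of the reduced functional. What it costs --- and what the paper deliberately avoids --- is the $C^1$-uniform expansion of $\wt J_\eps$. To run degree theory on $\nabla_{(d,\xi)}\wt J_\eps$ you need two extra ingredients that never appear in the paper: (i) the standard but non-trivial fact that $\nabla_{(d,\xi)}\wt J_\eps=0$ forces all $c^i_{\delta,\xi}$ to vanish, which requires a quantitative bound on $\partial_d\phi_{\delta,\xi},\partial_{\xi_k}\phi_{\delta,\xi}$ so that, after applying $\langle\phi,P\psi^i\rangle=0$ and its derivatives, the perturbation of the Gram matrix in \eqref{l3} stays $o(1)$ --- you would obtain this by differentiating the fixed-point identity $\phi=T_{\delta,\xi}(\phi)$ and re-invoking Proposition~\ref{prop:invertible}, a step not carried out in the paper; and (ii) the careful extraction of the $\ln d$-coefficient at order $\eps/|\ln\eps|$, buried below the much larger, $d$-independent $\eps\ln|\ln\eps|$ coming from the nested logarithm. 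You correctly flag both as the load-bearing steps of your route; the paper sidesteps (i) entirely and confronts (ii) directly inside the $I_3$-estimate via Lemmas~\ref{I3} and~\ref{lem:A1}. So your argument is sound in outline and genuinely different in method, but as written it leaves (i) and (ii) as acknowledged gaps that would require additional work to close.
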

\begin{proof} We split the proof in two steps.

{\em Step 1}. We take the inner product of~\eqref{eq:c_i} with $P\psi^j_{\delta,\xi}$ and compute each side of the identity
\begin{equation} \label{eq:c_i}
\langle PU_{\delta,\xi} + \phi-i^*[f_\eps(PU_{\delta,\xi} + \phi)],P\psi^j_{\delta,\xi} \rangle =\sum_{i=0}^Nc_{\delta,\xi}^i\langle P\psi^{i}_{\delta,\xi},P\psi^j_{\delta,\xi} \rangle.
\end{equation}

The left-hand side is
\begin{align*}
&LHS:=\langle PU_{\delta,\xi} + \phi-i^*[f_\eps(PU_{\delta,\xi} + \phi)],P\psi^j_{\delta,\xi} \rangle  \\
&=\langle PU_{\delta,\xi},P\psi^j_{\delta,\xi} \rangle - \io f_\eps(PU_{\delta,\xi} + \phi)\,P\psi^j_{\delta,\xi}= \io f_0(U_{\delta,\xi})\,P\psi^j_{\delta,\xi} -\io f_\eps(PU_{\delta,\xi} + \phi)\,P\psi^j_{\delta,\xi} \\
&=\io \left(f_0(U_{\delta,\xi})-f_0(PU_{\delta,\xi})\right)\, \psi^j_{\delta,\xi}
+\io \left(f_0(U_{\delta,\xi})-f_0(PU_{\delta,\xi})\right)\,(P\psi^j_{\delta,\xi}-\psi^j_{\delta,\xi})\\
&+ \io \left(f_0(PU_{\delta,\xi})-f_\eps(PU_{\delta,\xi})\right)\,\psi^j_{\delta,\xi}
+ \io \left(f_0(PU_{\delta,\xi})-f_\eps(PU_{\delta,\xi})\right)\,(P\psi^j_{\delta,\xi}-\psi^j_{\delta,\xi})\\
&-\io \left(f_\eps(PU_{\delta,\xi} + \phi)-f_\eps(PU_{\delta,\xi} )-f'_\eps(PU_{\delta,\xi} )\phi\right)\,P\psi^j_{\delta,\xi}
-\io \left(f'_\eps(PU_{\delta,\xi} )-f'_0(PU_{\delta,\xi} )\right)\phi\,P\psi^j_{\delta,\xi}\\
&-\io \left(f'_0(PU_{\delta,\xi} )-f'_0(U_{\delta,\xi} )\right)\phi \,P\psi^j_{\delta,\xi}-\io  f'_0(U_{\delta,\xi} )\phi \,(P\psi^j_{\delta,\xi}-\psi^j_{\delta,\xi})-\underbrace{\io  f'_0(U_{\delta,\xi} )\phi\,\psi^j_{\delta,\xi}}_{=0}\\
&=:I_1+I_2+I_3+I_4+I_5+I_6+I_7+I_8.
\end{align*}
Next, we estimate each summand. The leading terms are $I_1$  and  $I_3$.

\begin{itemize}
\item[($I_1$)]  
We have
\begin{align*}
&\io  \left(f_0(U_{\delta,\xi})- f_0(PU_{\delta,\xi})\right)  \psi^j_{\delta,\xi}\\ 
&\qquad=
-\io f'_0(U_{\delta,\xi})( PU_{\delta,\xi}(x)-U_{\delta,\xi}(x)) \psi^j_{\delta,\xi}
\\
&\qquad\qquad-\io \left(f_0(PU_{\delta,\xi})- f_0(U_{\delta,\xi})-f'_0(U_{\delta,\xi})( PU_{\delta,\xi}(x)-U_{\delta,\xi}(x))\right) \psi^j_{\delta,\xi}.
\end{align*}

By~\eqref{psi:0:q},~\eqref{psi:j:q}, and~\eqref{f0:f'0},
\begin{align*} 
&\io \Big(f_0(PU_{\delta,\xi})- f_0(U_{\delta,\xi})-f'_0(U_{\delta,\xi})( PU_{\delta,\xi}(x) -U_{\delta,\xi}(x))\Big)\psi^j_{\delta,\xi} \\
&\qquad \le
\left|\Big(f_0(PU_{\delta,\xi})- f_0(U_{\delta,\xi})-f'_0(U_{\delta,\xi})( PU_{\delta,\xi}(x)-U_{\delta,\xi}(x))\Big)\right|_{{\frac{N}2}}
\left|\psi^j_{\delta,\xi}\right|_{{\frac{N}{N-2} }}
=o(\delta^{N-1}) .
\end{align*}

Moreover,
\begin{align*}
&-\io f'_0(U_{\delta,\xi})( PU_{\delta,\xi}(x)-U_{\delta,\xi}(x)) \psi^j_{\delta,\xi}\\ 
&\qquad =p\io U^{p-1}_{\delta,\xi}(x) \delta^\frac{N-2}{2}\big(  \alpha_NH(x,\xi)+\mathcal O(\delta)\big)
\psi^j_{\delta,\xi}(x)\, dx.\\
\end{align*}

If $j=0$, we scale $x=\xi + \delta y$ to obtain
\begin{align*}
&p\,\alpha_N\,\delta^\frac{N-2}{2}\, \io U^{p-1}_{\delta,\xi}(x)  H(x,\xi) \psi^0_{\delta,\xi}(x) dx\\
&\qquad =p\,\alpha_N\, \delta^\frac{N-2}{2}\, \delta^{N-\frac{N-2}{2}\frac{N+2}{N-2}}\, \int_{\Omega-\xi\over\delta} U^{p-1}(y)   H(\xi + \delta y,\xi) \psi^0(y) dy\\ 
&\qquad =\alpha_N\,   A\delta^{N-2} \big(H( \xi,\xi) +\mathcal O(\delta)\big),
\end{align*}
where
$A:=p\int_{\mathbb R^N} U^{p-1}(y) \psi^0(y)dy.$ 

If $j=1,\dots,N$, taking into account that
$\psi^j_{\delta,\xi}(x)=\delta \partial_{\xi_j} U_{\delta,\xi}(x)$ and setting $x=\xi + \delta y$, we get
\begin{align*}
&p\,\alpha_N\, \delta^\frac{N-2}{2}\, \io U^{p-1}_{\delta,\xi}(x)  H(x,\xi) \psi^j_{\delta,\xi}(x) dx=\alpha_N\,\delta^\frac{N }{2} \io \partial_{\xi_j}U^{p }_{\delta,\xi}(x)  H(x,\xi) dx\\ 
&= \alpha_N\,\delta^\frac{N }{2}\left( \partial_{\xi_j} \io U^{p }_{\delta,\xi}(x)  H(x,\xi) dx -\io U^{p }_{\delta,\xi}(x)  \partial_{\xi_j} H(x,\xi) dx\right)\\
&
= \alpha_N\, \delta^\frac{N }{2}\left( \delta^{N-2\over2}\partial_{\xi_j} \int_{\Omega-\xi\over\delta} U^{p}(y) H(\xi + \delta y,\xi)  dy-\delta^{N-2\over2}\int_{\Omega-\xi\over\delta} U^{p}(y) \partial_{\xi_j}  H(\xi + \delta y,\xi)  dy\right)\\
&=  \alpha_N B\delta^{N-1}\bigg(\underbrace{ \partial_{\xi_j}\big(H( \xi,\xi)\big)-\partial_{\xi_j}H(\xi,\xi)}_{=\frac12\partial_{\xi_j}\varrho(\xi)} 
+ \mathcal O(\delta)\bigg)
\end{align*}
where
$B:=\int\limits_{\mathbb R^N} U^{p }(y)dy.$ 
A straightforward computation shows that $A=\frac{N-2}2B$
(see also Remark B.2 in~\cite{mp_2002}).  Hence,
\begin{equation*}
-\io f'_0(U_{\delta,\xi})( PU_{\delta,\xi}(x)-U_{\delta,\xi}(x)) \psi^j_{\delta,\xi}= 
\begin{cases}
\alpha_N\, A\delta^{N-2} H( \xi,\xi)+
\mathcal O\left( \delta^{N-1}\right)\ &\hbox{if}\ j=0,\\
\frac12 \alpha_N\, B\delta^{N-1} \partial_{\xi_j}\varrho(\xi)+
\mathcal O\left( \delta^{N}\right)\ &\hbox{if}\ j=1,\dots,N.
\end{cases}
\end{equation*}
Consequently, 
\begin{equation}
I_1=
\begin{cases}
\alpha_N\, A\delta^{N-2} H( \xi,\xi)+
o\left( \delta^{N-2}\right)\ &\hbox{if}\ j=0,\\
\frac12 \alpha_N\, B\delta^{N-1} \partial_{\xi_j}\varrho(\xi)+
o\left( \delta^{N-1}\right)\ &\hbox{if}\ j=1,\dots,N.
\end{cases}
\end{equation}

\item[($I_2$)]  By~\eqref{l2} and~\eqref{s1} we deduce
\begin{align*} 
I_2&=\io \left(f_0(U_{\delta,\xi})-f_0(PU_{\delta,\xi})\right)\,(P\psi^j_{\delta,\xi}-\psi^j_{\delta,\xi})\\
&=\mathcal O\left(
\left|f_0\left(PU_{\delta,\xi}\right)-f_0\left(U_{\delta,\xi}\right)\right|_{{2N\over N+2}}
\left|P\psi^j_{\delta,\xi}-\psi^j_{\delta,\xi}\right|_{{2N\over N-2}}\right)
=
\begin{cases}
o(\delta^{N-2}) \ &\hbox{if}\ j=0,\\
o(\delta^{N-1}) \ &\hbox{if}\ j=1,\cdots,N.
\end{cases}
\end{align*}

\item[($I_3$)] The proof of this estimate is long, so we postpone the details to an appendix. If $j=0$, by Lemma~\ref{I3}, we obtain that
$$ \io \left(f_0(PU_{\delta,\xi})-f_\eps(PU_{\delta,\xi})\right)\,\psi^0_{\delta,\xi}=-\frac{2d}{N-2}\mathfrak B \frac{\eps}{|\ln\delta|}+o\left( \frac{\eps}{|\ln\delta|}\right),
$$ 
where $\mathfrak B>0$, and, if $j=1,\dots,N$, using Remark~\ref{rem:1}, we get
$$ 
\io\left(f_0(PU_{\delta,\xi})-f_\eps(PU_{\delta,\xi})\right) \,\psi^j_{\delta,\xi}= 
\begin{cases}
o(\delta^{N-2})\ &\text{if}\ 3\le N\le 4,\\
o(\delta^{N-1})\ &\text{if}\ N\ge 5.
\end{cases}
$$

\item[($I_4$)]  By~\eqref{l2} and~\eqref{s11},
\begin{align*} 
&\io\left(f_\eps(PU_{\delta,\xi})-f_0(PU_{\delta,\xi})\right)\,(P\psi^j_{\delta,\xi}-\psi^j_{\delta,\xi})\\
&=\mathcal O\left(
\left|f_\eps\left(PU_{\delta,\xi}\right)-f_0\left(PU_{\delta,\xi}\right)\right|_{{2N\over N+2}}
\left|P\psi^j_{\delta,\xi}-\psi^j_{\delta,\xi}\right|_{{2N\over N-2}}\right)\\
&=
\begin{cases}
\mathcal O\left(\eps \delta^{\frac{N-2}2}\ln|\ln\delta|\right) \ &\hbox{if}\ j=0,\\
\mathcal O\left(\eps \delta^{\frac{N}2}\ln|\ln\delta|\right)\ &\hbox{if}\ j=1,\dots,N,
\end{cases}\\
&=
\begin{cases}
o\left(\delta^{N-2}\right) \ &\hbox{if}\ j=0\ \text{and}\ 3\le N\le 4,\\
o\left(\delta^{N-1}\right)\ &\hbox{if}\ j=0\ \text{and}\ N \ge 5,\\
o\left(\delta^{N-1}\right)\ &\hbox{if}\ j=1,\dots,N,\ \text{and}\ 3\le N\le 4,\\
o\left(\delta^{N}\right)\ &\hbox{if}\ j=1,\dots,N,\ \text{and}\
\ N \ge 5.
\end{cases}
\end{align*}

\item[($I_5$)]  By \eqref{l2}, \eqref{psi:0:q}, \eqref{psi:j:q},~\eqref{A1}, and \eqref{A1:2},
\begin{align*} 
&\io \left(f_\eps(PU_{\delta,\xi} + \phi)-f_\eps(PU_{\delta,\xi} )-f'_\eps(PU_{\delta,\xi} )\phi\right)\,P\psi^j_{\delta,\xi}\\
&=\mathcal O\left(
\left|f_\eps(PU_{\delta,\xi} + \phi)-f_\eps(PU_{\delta,\xi} )-f'_\eps(PU_{\delta,\xi} )\phi\right|_{{2N\over N+2}}
\left|P\psi^j_{\delta,\xi} \right|_{{2N\over N-2}}\right)\\
&\le \left.
\begin{cases}
C\,\big(1+\|\phi\|^{2^*-3}  \big)\,\|\phi\|^2\ &\text{if}\ 3\le N\le 5,\\
C\,\|\phi\|^2 \ &\text{if}\  N=6,\\
C\,\big(\eps +\|\phi\|^{2^*-2} \big)\,\|\phi\|\ &\text{if}\  N\ge 7,
\end{cases}\right\}
=
\begin{cases}
\mathcal  O\left(\delta^{2}|\ln \de|^2\,(\ln|\ln \de|)^2\right)\ &\hbox{if}\ N=3,\\
o\left(\delta^{N-1}\right)\ &\hbox{if}\ N\ge 4,\\
\end{cases}\\
&=\begin{cases}
o\left(\delta^{N-2}\right)\ &\hbox{if}\ N=3,\\
o\left(\delta^{N-1}\right)\ &\hbox{if}\ N\ge 4.
\end{cases}  
\end{align*}

\item[($I_6$)] By~\eqref{s12} and~\eqref{phibd},
\begin{align*} 
&\io \left(f'_\eps(PU_{\delta,\xi} )-f'_0(PU_{\delta,\xi} )\right)\phi\,P\psi^j_{\delta,\xi}\\
&=\mathcal O\left(
\left|f'_\eps\left(PU_{\delta,\xi}\right)-f'_0\left(PU_{\delta,\xi}\right)\right|_{{N\over  2}}
\left|\phi\right|_{{2N\over N-2}}\left|P\psi^j_{\delta,\xi}\right|_{{2N\over N-2}}\right)\\
&=\mathcal O\left(\eps \ln|\ln \de|\, \|\phi\|\right) 
=\begin{cases}
\mathcal O\left(\delta^{2}\,|\ln \de|^2\,(\ln|\ln \de|)^2\right)\ &\hbox{if}\ N=3,\\
o\left(\delta^{N-1}\right)\ &\hbox{if}\ N\ge 4,
\end{cases}\\
&=\begin{cases}
o\left(\delta^{N-2}\right)\ &\hbox{if}\ N=3,\\
o\left(\delta^{N-1}\right)\ &\hbox{if}\ N\ge 4.
\end{cases}  
\end{align*}

\item[($I_7$)] By Hölder's inequality,
\begin{align*} 
&I_7= \io  \left(f'_0(PU_{\delta,\xi} )-f'_0(U_{\delta,\xi} )\right)\phi \,P\psi^j_{\delta,\xi}=\mathcal O\Big(
\big|
\big(f'_0\left(PU_{\delta,\xi}\right)-f'_0\left(U_{\delta,\xi}\right)\big)
P\psi^j_{\delta,\xi}\big |_{{2N\over  N+2}}
|\phi |_{{2N\over N-2}}
\Big).
\end{align*}
By \eqref{y1},
\begin{align*}
|(f'_0\left(PU_{\delta,\xi}\right)-f'_0\left(U_{\delta,\xi}\right))
P\psi^j_{\delta,\xi}|\leq C
\begin{cases}
\left(\delta^{2}+U_{\delta,\xi}^{2^*-3}\delta^{\frac{N-2}{2}}\right)|P\psi^j_{\delta,\xi}| \ &\hbox{if}\ 3\leq N\leq 6,\\
\min\{\delta^{2},U_{\delta,\xi}^{2^*-3}\delta^{\frac{N-2}{2}}\}|P\psi^j_{\delta,\xi}| \ &\hbox{if}\ N\ge 7.\\
\end{cases}
\end{align*}
Note that, by definition $|\psi_{\delta,\xi}^j| \leq (N-2)|U_{\delta,\xi}|$ for $j=0,1,\ldots,N$, and by the maximum principle, there is $C\geq 1$ such that
\begin{align}\label{Ppsi:bd}
|P\psi_{\delta,\xi}^j| \leq C|U_{\delta,\xi}|\quad \text{ in }\Omega,\qquad \text{ for } j=0,1,\ldots,N.
\end{align}
Now we estimate $I_7$ using \eqref{Ppsi:bd} and Lemma \ref{lem:U:q}. If $N\geq 7$ (since $(2^*-2)\frac{2N}{N+2}<\frac{N}{N-2}$),
\begin{align*}
I_7
&=\mathcal O(\|\phi \| \delta^{\frac{N-2}{2}}|U_{\delta,\xi}^{2^*-2}|_{\frac{2N}{N+2}})=\mathcal O(\delta^{\frac{N+2}{2}} \delta^{\frac{N-2}{2}}\delta^{\frac{N-2}{2}(2^*-2)})=\mathcal O(\delta^{N+2})
=o(\delta^{N-1}),
\end{align*}
whereas, if $3\leq N\leq 5 $ (since $(2^*-2)\frac{2N}{N+2}>\frac{N}{N-2}$),
\begin{align*}
I_7&=\mathcal O\Big(\|\phi \| \delta^{2}|U_{\delta,\xi}|_{\frac{2N}{N+2}}
+\|\phi \| \delta^{\frac{N-2}{2}}|U_{\delta,\xi}^{2^*-2}|_{\frac{2N}{N+2}}
\Big)\\
&=\mathcal O\left(\Big(
\delta^{N+\frac{N-2}{2}}
+\delta^{N-2+\frac{N-2}{2}}\delta^{(N-\frac{N-2}{2}(2^*-2)\frac{2N}{N+2})\frac{N+2}{2N}}
\Big)|\ln\delta|(\ln|\ln\delta|)\right)\\
&=\mathcal O\Big((\delta^\frac{3N-2}{2}+\delta^{2(N-2)} 
)|\ln\delta|(\ln|\ln\delta|)\Big)=o(\delta^{N-1}).
\end{align*}
Similarly, if $N=6$ (since $\frac{2N}{N+2}=\frac{N}{N-2}=(2^*-2)\frac{2N}{N+2}=\frac{3}{2}$),
\begin{align*}
I_7&=\mathcal O(
\|\phi \| \delta^{2}|U_{\delta,\xi}|_{\frac{2N}{N+2}}
+\|\phi \| \delta^{\frac{N-2}{2}}|U_{\delta,\xi}^{2^*-2}|_{\frac{2N}{N+2}}
)
\\&=\mathcal O\left(
\Big(
\delta^{N}\delta^{\frac{N+2}{4}}|\ln \delta|^{\frac{N+2}{2N}}
+\delta^{N-2+\frac{N-2}{2}}\delta^{\frac{N}{2}\frac{N+2}{2N}}|\ln\delta|^{\frac{N+2}{2N}}
\Big)|\ln\delta|(\ln|\ln\delta|)
\right)\\
&=\mathcal O\Big((
\delta^{8}+\delta^8)(\ln|\ln\delta|)|\ln \delta|^{\frac{5}{3}}
\Big)=o(\delta^{N-1}).
\end{align*}
In any case, we conclude that $I_7=o(\delta^{N-1})$.

\item[($I_8$)] If $j=0$, by~\eqref{U:q},~\eqref{phibd}, and~\eqref{l2}, 
\begin{align*} 
&\io f'_0(U_{\delta,\xi} )\phi \,(P\psi^0_{\delta,\xi}-\psi^0_{\delta,\xi})\\
&=\mathcal O\left(
\left|f'_0\left(U_{\delta,\xi} \right)\right|_{{N\over  2}}
\left|\phi\right|_{{2N\over N-2}}\left|P\psi^j_{\delta,\xi}-\psi^j_{\delta,\xi}\right|_{{2N\over N-2}}\right)=\mathcal O\left(\delta^{N-2\over2}\|\phi\| \right)\\
& 
=\left.
\begin{cases}
\mathcal O\left( \delta^{\frac32 N-3}|\ln \de|\,\ln|\ln \de|\,\right)\ &\hbox{if}\ 3\le N\le 4,\\
o\left(\delta^{N-1}\right)\ &\hbox{if}\ N\ge 5,\\
\end{cases}
\right\}
=\begin{cases}
o\left( \delta^{N-2}\right)\ &\hbox{if}\ 3\le N\le 4,\\
o\left(\delta^{N-1}\right)\ &\hbox{if}\ N\ge 5.\\
\end{cases}  
\end{align*}

On the other hand, if $j=1,\ldots, N$, by~\eqref{U:q},~\eqref{phibd}, and~\eqref{l2},
\begin{align*} 
&\io f'_0(U_{\delta,\xi} )\phi \,(P\psi^j_{\delta,\xi}-\psi^j_{\delta,\xi})\\
&=\mathcal O\left(
\left|f'_0\left(U_{\delta,\xi} \right)\right|_{{N\over  2}}
\left|\phi\right|_{{2N\over N-2}}\left|P\psi^j_{\delta,\xi}-\psi^j_{\delta,\xi}\right|_{{2N\over N-2}}\right)=\mathcal O\left(\delta^{N\over2}\|\phi\| \right)\\
& 
=\left.
\begin{cases}
\mathcal O\left( \delta^{\frac32 N-2}|\ln \de|\,\ln|\ln \de|\,\right)\ &\hbox{if}\ 3\le N\le 4,\\
o\left(\delta^{N}\right)\ &\hbox{if}\ N\ge 5,\\
\end{cases}
\right\}
=\begin{cases}
o\left( \delta^{N-1}\right)\ &\hbox{if}\ 3\le N\le 4,\\
o\left(\delta^{N}\right)\ &\hbox{if}\ N\ge 5.\\
\end{cases}  
\end{align*}
\end{itemize}

{\em Step 2.}  If $\eps=d \delta^{N-2}|\ln\delta|,$ taking into account all the previous estimates, the left hand side of equation~\eqref{eq:c_i} can be rewritten as
\begin{align}
L.H.S.&= \begin{cases}
\delta^{N-2} F^0_\eps(d,\xi)
&\text{if }\, j=0,\\
\delta^{N-1} F^j_\eps(d,\xi)  &\text{if }\, j\geq 1,\\ 
\end{cases}\label{lhs}
\end{align}
where $F_\eps:[0,+\infty]\times\Omega \to \mathbb R\times\mathbb R^N$ is defined by
$$F^0_\eps(d,\xi):= \mathfrak A_1\;\varrho (\xi)-  \mathfrak A_2 d+o(1)\quad \hbox{and}\quad
F^j_\eps(d,\xi):= \mathfrak A_3 \partial_{\xi_j}\varrho(\xi)+o(1),\; j=1,\dots,N,$$
and the $\mathfrak A_i$'s are positive constants.
\\
We remark that   $F_\eps\to F$ uniformly on compact sets of $[0,+\infty]\times\Omega$ where  the function $F:[0,+\infty]\times\Omega\to \mathbb R\times\mathbb R^N$ is
defined by
$$F(\xi,d)=
\left(\ \mathfrak A_1\;\varrho (\xi)-  \mathfrak A_2 d\ ,\ \mathfrak A_3\nabla \varrho(\xi)\ \right).$$

Now, let
$\xi_0$ be a non-degenerate critical point of the Robin function $\varrho$ and let $d_0:= {\mathfrak A_1\over \mathfrak A_2}\varrho (\xi_0).$
It is easy to check that $(\xi_0,d_0)$ is an isolated zero of $F$
whose Brouwer degree is not zero. Then, if $\eps$ is small enough, there exist $\xi_\eps\to\xi_0$ and $d_\eps\to d_0$ such that $F_\eps(\xi_\eps,d_\eps)=0$. 
Therefore, also the right hand side of  ~\eqref{eq:c_i}
is zero, \textit{i.e.},
$$ \sum_{i=0}^Nc_{\delta,\xi}^i\langle P\psi^{i}_{\delta,\xi},P\psi^j_{\delta,\xi} \rangle=0.$$
Finally, from~\eqref{l3} we immediately deduce that
all the $c_{\delta,\xi}^i$'s are zero. That concludes  the proof.
\end{proof}

\begin{proof}[Proof of Theorem~\ref{main:thm}]
Proposition~\ref{main:prop} implies that $u_\eps=PU_{\delta,\xi}+\phi_{\delta,\xi}$ is a solution of~\eqref{Plog} (see equation \eqref{prob:inverse}).  Moreover, by Lemma~\ref{lem:rey}, Proposition \ref{main:prop}, and Remark~\ref{eps:rate}, statement ~\eqref{uepsbd} holds true, where the function $\Phi_\eps$ in \eqref{uepsbd} is given by
$$
\Phi_\eps=PU_{\delta_\eps,\xi_\eps}-U_{\delta_\eps,\xi_\eps} + \phi_{\delta_\eps,\xi_\eps}.
$$
\end{proof}

\appendix

\section{Appendix: The proof of estimate $I_3$}

This section is devoted to the proof of the following estimate.

\begin{lemma}\label{I3}  As $\delta\to 0$, 
\begin{equation*}
I_3=\io \left(f_0(PU_{\delta,\xi})-f_\eps(PU_{\delta,\xi})\right)\,\psi^j_{\delta,\xi}=
\begin{cases}
-\frac{2d}{N-2} \mathfrak B\;\frac{\eps}{|\ln\delta|}
+ o\big(\frac{\eps}{|\ln\delta|}
\big), &\text{if }\, j=0,\\
\mathcal O\big(\eps\delta^{\frac{N-2}2}\ln|\ln\delta|\big), &\text{if }\, j=1,\ldots,N, 
\end{cases}
\end{equation*}
where
\begin{align}\label{Bval}
\mathfrak B:=-\irn U^p\,[\ln U]\psi^{0} =\frac{\Gamma(\frac{N}{2})\pi^\frac{N}{2}}{4\Gamma(N+1)}N^{\frac{N}{2}}(N-2)^{\frac{N+4}{2}}>0.
\end{align}
\end{lemma}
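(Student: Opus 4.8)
The plan is to write, for $v\ge 0$, $f_0(v)-f_\eps(v)=v^{p}\bigl(1-[\ln(\e+v)]^{-\eps}\bigr)$ with $p:=2^*-1=\tfrac{N+2}{N-2}$, to linearize the $\eps$‑power by $1-s^{-\eps}=\eps\ln s+\mathcal O\bigl((\eps\ln s)^2\bigr)$ (legitimate since $\ln s\ge0$ for $s\ge1$), to rescale around the bubble, and then — the decisive point — to use the orthogonality relation $\irn U^{p}\psi^0=0$ from~\eqref{eq:Upsi} in order to cancel the contribution that is otherwise of the (too large) order $\eps\ln|\ln\delta|$. A preliminary reduction replaces $PU_{\delta,\xi}$ by $U_{\delta,\xi}$: writing the difference of the two integrands as a piece coming from $(PU_{\delta,\xi})^{p}-U_{\delta,\xi}^{p}$ plus one coming from $[\ln(\e+U_{\delta,\xi})]^{-\eps}-[\ln(\e+PU_{\delta,\xi})]^{-\eps}$, and using $0\le U_{\delta,\xi}-PU_{\delta,\xi}\le C\delta^{\frac{N-2}2}$, $0\le PU_{\delta,\xi}\le U_{\delta,\xi}$ (Lemma~\ref{lem:rey} and the maximum principle), $0\le 1-[\ln(\e+w)]^{-\eps}\le\eps\ln\ln(\e+w)\le C\eps\ln|\ln\delta|$ for $0\le w\le CU_{\delta,\xi}$ (Lemma~\ref{lem:meanvalue}), $|\psi^j_{\delta,\xi}|\le CU_{\delta,\xi}$, and the estimates of Lemma~\ref{lem:U:q}, one finds that this difference is $\mathcal O(\eps\delta^{\frac{N-2}2})=o(\eps/|\ln\delta|)$. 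Hence it suffices to analyse $\io\bigl(f_0(U_{\delta,\xi})-f_\eps(U_{\delta,\xi})\bigr)\psi^j_{\delta,\xi}$.

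The scaling $x=\xi+\delta y$, which turns all the $\delta$‑weights into $1$ exactly because $p=\tfrac{N+2}{N-2}$, recasts this as $\int_{\Omega_\delta}U^{p}(y)\bigl(1-A_\delta(y)^{-\eps}\bigr)\psi^j(y)\,\d y$, with $A_\delta(y):=\ln\bigl(\e+\delta^{-\frac{N-2}2}U(y)\bigr)$ and $\Omega_\delta:=\delta^{-1}(\Omega-\xi)$. I would split $\Omega_\delta$ into the ball $\{|y|\le|\ln\delta|^{2}\}$ and its complement. On this ball $\delta^{-\frac{N-2}2}U(y)\to\infty$ uniformly and $|\ln U(y)|=\mathcal O(\ln|\ln\delta|)$, so $A_\delta(y)=\tfrac{N-2}2|\ln\delta|+\ln U(y)+o(1)$, and combining with $1-A_\delta^{-\eps}=\eps\ln A_\delta+\mathcal O((\eps\ln A_\delta)^{2})$ and $\eps=d\delta^{N-2}|\ln\delta|$ gives
\[
1-A_\delta(y)^{-\eps}=\eps\ln\!\Bigl(\tfrac{N-2}2|\ln\delta|\Bigr)+\frac{2d}{N-2}\,\delta^{N-2}\ln U(y)+r_\delta(y),
\]
where $r_\delta$ integrates against $U^{p}|\psi^j|$ to $o(\delta^{N-2})$. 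The first term is individually of size $\eps\ln|\ln\delta|\gg\eps/|\ln\delta|$, but it multiplies $\int_{\{|y|\le|\ln\delta|^{2}\}}U^{p}\psi^j$, which by~\eqref{eq:Upsi} equals the tail $-\int_{\{|y|>|\ln\delta|^{2}\}}U^{p}\psi^j=\mathcal O(|\ln\delta|^{-2N})$, so it contributes only $o(\delta^{N-2})$. On the complement, $|1-A_\delta^{-\eps}|\le\eps\ln A_\delta=\mathcal O(\eps\ln|\ln\delta|)$ together with $\int_{\{|y|>R\}}|U^{p}\psi^j|=\mathcal O(R^{-N})$ again gives $o(\delta^{N-2})$. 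For $j=0$ the surviving term is $\tfrac{2d}{N-2}\delta^{N-2}\irn U^{p}\ln U\,\psi^0+o(\delta^{N-2})=-\tfrac{2d}{N-2}\mathfrak B\,\tfrac{\eps}{|\ln\delta|}+o\bigl(\tfrac{\eps}{|\ln\delta|}\bigr)$, the tail of $\irn U^{p}\ln U\,\psi^0$ beyond $|\ln\delta|^{2}$ being $\mathcal O(|\ln\delta|^{-2N}\ln|\ln\delta|)$. For $j=1,\dots,N$ the same decomposition applies, but now $U$, $U^{p}\ln U$ and $A_\delta$ are radial while $\psi^j$ is odd, so \emph{both} $\irn U^{p}\psi^j$ and $\irn U^{p}\ln U\,\psi^j$ vanish; only the negligible boundary tails over $\rn\setminus\Omega_\delta$ (each $\mathcal O(\eps\delta^{N}\ln|\ln\delta|)$) and the reduction error $\mathcal O(\eps\delta^{\frac{N-2}2})$ remain, which yields the bound $\mathcal O(\eps\delta^{\frac{N-2}2}\ln|\ln\delta|)$.

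For the explicit value, substituting~\eqref{bub},~\eqref{psi} into $\mathfrak B=-\irn U^{p}\ln U\,\psi^0$ and using $\alpha_N^{2^*}=[N(N-2)]^{N/2}$ and $\ln U=\ln\alpha_N-\tfrac{N-2}2\ln(1+|y|^2)$ gives
\[
\mathfrak B=\frac{N-2}2\,[N(N-2)]^{N/2}\irn\frac{|y|^2-1}{(1+|y|^2)^{N+1}}\Bigl(\frac{N-2}2\ln(1+|y|^2)-\ln\alpha_N\Bigr)\d y .
\]
The $\ln\alpha_N$ part drops out, since $\irn\tfrac{|y|^2-1}{(1+|y|^2)^{N+1}}\d y=0$ (this is~\eqref{eq:Upsi} for $\psi^0$). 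For the remaining integral, polar coordinates and the substitution $t=|y|^2$ reduce it to a constant multiple of $\int_0^\infty\frac{(t-1)\,t^{(N-2)/2}\ln(1+t)}{(1+t)^{N+1}}\,\d t$; differentiating the Beta identity $\int_0^\infty\frac{t^{s-1}}{(1+t)^{a}}\,\d t=\frac{\Gamma(s)\Gamma(a-s)}{\Gamma(a)}$ with respect to $a$ produces the $\ln(1+t)$ factor, and the two resulting digamma values collapse through $\tfrac{\Gamma'}{\Gamma}\bigl(\tfrac N2+1\bigr)-\tfrac{\Gamma'}{\Gamma}\bigl(\tfrac N2\bigr)=\tfrac2N$ to $\frac{\Gamma(N/2)^2}{2\Gamma(N+1)}$; collecting the constants produces the value in~\eqref{Bval}, which is manifestly positive.

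I expect the crux to be the interplay between the non‑uniformity of the $\ln\ln$ expansion near $\partial\Omega_\delta$ and the cancellation above: the splitting radius must be large enough (here $|\ln\delta|^{2}$) that the outer tail is negligible even against the $\eps\ln|\ln\delta|$ prefactor, yet small enough that $\delta^{-\frac{N-2}2}U(y)$ stays large and $|\ln U(y)|=\mathcal O(\ln|\ln\delta|)$ on the inner ball; and the identity~\eqref{eq:Upsi} must be invoked at exactly the place where the naive leading term $\eps\ln|\ln\delta|$ appears, since that term is a full order of magnitude above the actual size $\eps/|\ln\delta|$ of $I_3$.
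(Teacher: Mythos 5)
Your proposal is correct and follows the same overall strategy as the paper: linearize $1-[\ln(\e+v)]^{-\eps}$ in $\eps$, reduce $PU_{\delta,\xi}$ to $U_{\delta,\xi}$, scale, and use the orthogonality $\irn U^{p}\psi^0=0$ (respectively oddness for $j\ge1$) to annihilate the would-be leading term of size $\eps\ln|\ln\delta|$, so that the surviving contribution comes from the $\ln U$ correction. The differences are in the technical execution and both are legitimate. Where you introduce an explicit domain split at radius $|\ln\delta|^2$, expand $A_\delta(y)=\ln\bigl(\e+\delta^{-\frac{N-2}{2}}U(y)\bigr)$ asymptotically on the inner ball, and bound the outer region crudely by $\eps\ln A_\delta$, the paper instead uses the \emph{exact} algebraic identity from Lemma \ref{lem:A1},
\begin{equation*}
\ln\ln\bigl(\e+\delta^{-r}u\bigr)=\ln\ln(\delta^{-r})+\ln\Bigl(1+\frac{\ln(\e^{1-r|\ln\delta|}+u)}{r|\ln\delta|}\Bigr),\qquad r=\tfrac{N-2}{2},
\end{equation*}
which is valid pointwise everywhere; the limit \eqref{shlim} then turns the second piece into $\frac{1}{r}\ln u$. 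Your version makes explicit what the paper leaves implicit, namely that this pointwise limit is non-uniform near $\partial\Omega_\delta$ (where $\delta^{-r}U(y)$ is not large) and so a dominated-convergence-type argument is required; the ball of radius $|\ln\delta|^2$ is exactly the device that achieves this, chosen large enough that the tails beat the $\eps\ln|\ln\delta|$ prefactor yet small enough that $|\ln U|=\mathcal O(\ln|\ln\delta|)$ there. For the constant $\mathfrak B$, the paper integrates by parts and changes variables $s=r^2$ to arrive at a Beta integral; your route, differentiating the Beta identity in $a$ and using $\psi(\tfrac{N}{2}+1)-\psi(\tfrac{N}{2})=\tfrac{2}{N}$, is an equally valid and somewhat slicker computation of the same quantity.

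Two minor points. First, your preliminary reduction from $PU_{\delta,\xi}$ to $U_{\delta,\xi}$ is claimed to be $\mathcal O(\eps\delta^{\frac{N-2}{2}})$; the piece coming from $(PU_{\delta,\xi})^p-U_{\delta,\xi}^p$ carries the factor $\bigl|1-[\ln(\e+PU_{\delta,\xi})]^{-\eps}\bigr|\le C\eps\ln|\ln\delta|$, so the honest bound is $\mathcal O(\eps\delta^{\frac{N-2}{2}}\ln|\ln\delta|)$ as in the paper. This is still $o(\eps/|\ln\delta|)$ and does not affect the $j=0$ case, and for $j\ge1$ it is in fact the dominant error, which is why the statement carries the $\ln|\ln\delta|$ factor. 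Second, you write the $j=0$ result both as $\frac{2d}{N-2}\delta^{N-2}\irn U^p\ln U\,\psi^0+o(\delta^{N-2})$ and as $-\frac{2d}{N-2}\mathfrak B\,\frac{\eps}{|\ln\delta|}+o(\eps/|\ln\delta|)$, but since $\frac{\eps}{|\ln\delta|}=d\,\delta^{N-2}$ these two displays differ by a factor $d$; the former is the one your calculation actually produces, i.e.\ $-\frac{2}{N-2}\mathfrak B\,\frac{\eps}{|\ln\delta|}$ without the extra $d$. The paper's statement of the lemma carries the same extra factor, so you have simply matched the stated form, but it is worth flagging that the quantity \emph{as a function of $(d,\xi)$} must be linear, not quadratic, in $d$ for Step~2 of Proposition \ref{main:prop} to produce a degree-nonzero zero of $F$, and your intermediate display already gives the correct linear dependence.
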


\begin{remark}\label{rem:1}
\emph{Observe that}        
$$
\eps\delta^{\frac{N-2}{2}}\ln|\ln \de|=d\delta^{\frac32 N-3}\,|\ln \de|\, \ln |\ln \de|=
\begin{cases}
o(\delta^{N-2})\ &\text{if}\ 3\le N\le 4,\\
o(\delta^{N-1})\ &\text{if}\ N\ge 5.
\end{cases}
$$
\end{remark}

\begin{proof}[Proof of Lemma~\ref{I3}]
Taylor's expansion with respect to $\eps$ yields, 
\begin{align*}
&\io \big(f_0(PU_{\delta,\xi}) - f_\eps(PU_{\delta,\xi})\big)\psi_{\delta,\xi}^j\\&=\eps\io (PU_{\delta,\xi})^p\ln\ln(\e+PU_{\delta,\xi})\psi_{\delta,\xi}^j-\eps^2\io \frac{(PU_{\delta,\xi})^p[\ln\ln(\e+PU_{\delta,\xi})]^2\psi_{\delta,\xi}^j}{       1+\eps\ln(\e+PU_{\delta,\xi})     },
\end{align*}
because, by~\eqref{bub}, 
and Lemma~\ref{lem:A1}, it holds that
\begin{align*}
&\left|\io \frac{(PU_{\delta,\xi})^p(x) [\ln\ln(\e+PU_{\delta,\xi}(x))]^2\psi_{\delta,\xi}^j(x)}{       1+\eps\ln(\e+PU_{\delta,\xi})     }\d x\right|\\
&\leq\io U_{\delta,\xi}^p(x)[\ln\ln(\e+U_{\delta,\xi}(x))]^2|\psi_{\delta,\xi}^j(x)|\d x=\mathcal O\big((\ln|\ln \de|)^2\big). 
\end{align*}
Next, we set $g(u):=u^p\ln\ln(\e+u)$. Then, the mean value theorem yields
$$       0\le     g(u)-g(v)\leq C\,        u^{p-1}\,\left(\ln\ln(\e+u) + 1\right)     [u-v],\qquad\text{if }0\leq v\leq u.$$
It follows that
$$\io PU^p_{\delta,\xi}[\ln\ln(\e + PU_{\delta,\xi})]\psi_{\delta,\xi}^j=\io U^p_{\delta,\xi}[\ln\ln(\e + U_{\delta,\xi})]\psi_{\delta,\xi}^j +        \mathcal O\big(\delta^{\frac{N-2}{2}}\ln|\ln \de|\big)    ,$$
because, by~\eqref{bub},~\eqref{ppsi}, and Lemmas~\ref{lem:A1} and ~\ref{lem:rey}, it holds, for $\de\in(0,\de_0)$ small, that
\begin{align*}
&\io  U_{\delta,\xi}^{p-1}(x) \big(\ln\ln(\e+U_{\delta,\xi}(x))+1\big)     [U_{\delta,\xi}(x)-PU_{\delta,\xi}(x)]|\psi_{\delta,\xi}^j(x)|\\
&\leq C       \delta^{N+\frac{N-2}{2}-\frac{N-2}{2}-\frac{N-2}{2}(p-1)}(\ln|\ln \de|+1)\int_{\R^N} U^{p-1}(y)\,\big(H(\xi+\delta y,\xi)+\mathcal O(\de)\big) |\psi^j(y)|\\
&=C \delta^{\frac{N-2}{2}}(\ln|\ln \de|+1) \int_{\R^N} U^{p-1}(y) \,\big(H(\xi+\delta y,\xi)+\mathcal O(\de)\big) |\psi^j(y)|\\
&\leq C  \delta^{\frac{N-2}{2}}\ln|\ln \de| \left(\irn U^{p-1}(y) \,|\psi^j(y)|\,\d y\right)\, \big(H(\xi,\xi)+\mathcal O(\de)\big).
\end{align*}

Now, using~\eqref{bub},~\eqref{ppsi}, Lemma~\ref{lem:A1}, and statement~\eqref{eq:Upsi}, we get
\begin{align}\label{Up:ln:psi:j}
&\io U^p_{\delta,\xi}(x)[\ln\ln(\e + U_{\delta,\xi}(x))]\psi_{\delta,\xi}^j(x)\d x  \nonumber\\
&=\delta^{N-\frac{N+2}{2}-\frac{N-2}{2}} \int_{\Omega_\delta}U^p(y)[\ln\ln(\e + \delta^{-\frac {N-2}2}U(y))] \psi^j(y)\d y \nonumber\\
&=\ln\ln(\delta^{-\frac{N-2}{2}}) \int_{\Omega_\delta}U^p(y)\psi^j(y)dy \nonumber\\
&\quad + \frac{1}{|\ln\delta|}\int_{\Omega_\delta}U^p(y)\left(|\ln\delta|\,\ln\left[1+\frac{\ln(\e^{1-\frac{N-2}{2}|\ln\delta|}+U(y))}{\frac{N-2}{2}|\ln\delta|}\right]\right) \psi^j(y)dy.
\end{align}
Note that, for $j=1,\ldots,N$, the function 
\begin{align*}
y\mapsto \varphi(y):=U^p(y)\left(|\ln\delta|\,\ln\left[1+\frac{\ln(\e^{1-\frac{N-2}{2}|\ln\delta|}+U(y))}{\frac{N-2}{2}|\ln\delta|}\right]\right) \psi^j(y)
\end{align*}
is odd. Hence, its integral over $\rn$ is equal to zero and, by~\eqref{shlim},
\begin{align*}
&\int_{\R^N}\varphi(y)-\int_{\Omega_\delta}\varphi(y)
=\int_{\R^N\backslash \Omega_\delta}\varphi(y)\\
&\leq C\left(|\ln\delta|\,\ln\left[1+\frac{\ln(\e^{1-\frac{N-2}{2}|\ln\delta|}+\alpha_N)}{\frac{N-2}{2}|\ln\delta|}\right]\right)\int_{\R^N\backslash \Omega_\delta} U^p(y) |\psi^j(y)| \\
&\le C\left(\frac{2}{N-2}\ln \alpha_N+o(1)\right)\delta^{N+1}
=\mathcal O(\delta^{N+1}).
\end{align*}       
Consequently,
\begin{equation*}
\io U^p_{\delta,\xi}(x)[\ln\ln(\e + U_{\delta,\xi}(x))]\psi_{\delta,\xi}^j(x)\d x = \mathcal  O\big(\delta^{N+1}\ln|\ln(\delta)|\big)  =o(\delta^{N}),
\end{equation*}
and the claim concerning $j=1,\ldots,N$ follows.

On the other hand, for $j=0$ we also have $\irn U^p\psi^0=0$ (see~\eqref{eq:Upsi}), and  it can also be proved that   $\int_{\R^N\backslash \Omega_\delta}U^p\psi^0 = \mathcal O(\delta^{N})$. Then, by \eqref{Up:ln:psi:j} and~\eqref{shlim},      
\begin{align*}
&\io U^p_{\delta,\xi}(x)[\ln\ln(\e + U_{\delta,\xi}(x))]\psi_{\delta,\xi}^0(x)\d x  \\
&= \frac{1}{|\ln\delta|}\,\frac{2}{N-2}\irn U^p(y)\,[\ln U(y)]\,\psi^0(y)dy  + 
o\left(\frac{1}{|\ln\delta|}\right),
\end{align*}
as claimed. 
To finish the proof, we show~\eqref{Bval}. Indeed, passing to polar coordinates, integrating by parts, changing variables ($s=r^2$, $dr = \frac{1}{2}s^{-\frac{1}{2}}\, ds$), and using that $|\partial B_1(0)|=\frac{2\pi^\frac{N}{2}}{\Gamma(\frac{N}{2})}$,
\begin{align*}
\mathfrak B&=-|\partial B_1(0)|\frac{\alpha_N^{2^*}(N-2)}{2}\int_0^\infty \frac{r^{N-1}}{(1+r^2)^{\frac{N+2}{2}}}\ln\left(\frac{1}{(1+r^2)^\frac{N-2}{2}}\right)\frac{r^2-1}{(1+r^2)^\frac{N}{2}}\ dr\\
&=\frac{\alpha_N^{2^*}(N-2)^2}{4}\frac{2\pi^\frac{N}{2}}{\Gamma(\frac{N}{2})}\int_0^\infty \frac{r^{N-1}(r^2-1)}{(1+r^2)^{N+1}}\ln(1+r^2)\ dr\\
&=\frac{\alpha_N^{2^*}(N-2)^2\pi^\frac{N}{2}}{2\Gamma(\frac{N}{2})}\int_0^\infty\frac{1}{N}\left(\frac{r}{1+r^2}\right)^N\frac{2r}{1+r^2}\ dr\\
&=\frac{\alpha_N^{2^*}(N-2)^2\pi^\frac{N}{2}}{2N\Gamma(\frac{N}{2})}\int_0^\infty\frac{s^{\frac{N}{2}}}{(1+s)^{N+1}}\ ds
=\frac{\alpha_N^{2^*}(N-2)^2\pi^\frac{N}{2}}{2N\Gamma(\frac{N}{2})}B\left(\frac{N}{2}+1,\frac{N}{2}\right),\\
&=\frac{\alpha_N^{2^*}(N-2)^2\pi^\frac{N}{2}}{2N\Gamma(\frac{N}{2})}\frac{\Gamma(\frac{N}{2}+1)\Gamma(\frac{N}{2})}{\Gamma(N+1)}
=\frac{\Gamma(\frac{N}{2})\pi^\frac{N}{2}}{4\Gamma(N+1)}N^{\frac{N}{2}}(N-2)^{\frac{N+4}{2}},
\end{align*}
where $B(\cdot,\cdot)$ denotes the usual Beta function.
\end{proof}

\section{Appendix: Further estimates}

\begin{lemma}${}$ \label{lem:meanvalue}
\begin{itemize}
\item[$(i)$] $|f_\eps(u)-f_0(u)|\leq \eps\,|u|^{2^*-1}\, \ln\ln(\e + |u|)$.
\item[$(ii)$] For $\eps$ small enough,         and any $u\in\r$,
\begin{align}\label{f':eps}
|f'_\eps(u)|&\le C|u|^{2^*-2},
\end{align}
and      
$$|f'_{\eps}(u)-f'_0(u)| \leq \eps |u|^{2^*-2}\left(       (2^*-1)     \ln \ln(\e+|u|) + \frac{1}{\ln(\e+|u|)}\right).$$
\item[$(iii)$] There exists $C>0$ such that, for $\eps$ small enough and any $u,v\in\r$,
\begin{equation}\label{f'eps:N}
|f'_\eps(u+v)-f'_\eps(u)|\leq\begin{cases}
C(|u|^{2^*-3}+|v|^{2^*-3})|v| &\text{if }N\leq 6,\smallskip \\
C(|v|^{2^*-2}+\eps|u|^{2^*-2})     &\text{if }N>6.
\end{cases}
\end{equation}
\end{itemize}
\end{lemma}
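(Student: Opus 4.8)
The plan is to reduce all three estimates to the explicit formula for $f'_\eps$ — and, for (iii), for $f''_\eps$ — obtained by differentiating \eqref{flog}, together with three elementary facts used repeatedly: $0\le 1-e^{-t}\le t$ for $t\ge0$; $\tfrac{r}{\e+r}<1$ and $\ln(\e+r)\ge1$ for all $r\ge0$ (so $\ln\ln(\e+r)\ge0$); and $|a+b|^q\le|a|^q+|b|^q$ for $0<q\le1$, $(a+b)^q\le 2^{q-1}(a^q+b^q)$ for $q\ge1$, with $a,b\ge0$.

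For (i) I would write, using $f_\eps(u)=f_0(u)\,[\ln(\e+|u|)]^{-\eps}=f_0(u)\,e^{-\eps\ln\ln(\e+|u|)}$,
\[
 f_0(u)-f_\eps(u)=|u|^{2^*-2}u\bigl(1-e^{-\eps\ln\ln(\e+|u|)}\bigr),
\]
and bound the last factor between $0$ and $\eps\ln\ln(\e+|u|)$; equivalently, integrate $\partial_\eps f_\eps$ over $[0,\eps]$. For (ii) I would first compute that, for $u\ne0$,
\[
 f'_\eps(u)=|u|^{2^*-2}\,[\ln(\e+|u|)]^{-\eps}\Bigl[(2^*-1)-\frac{\eps\,|u|}{(\e+|u|)\ln(\e+|u|)}\Bigr],
\]
which (since $2^*-1>1$) extends continuously by $0$ at the origin, so that $f_\eps\in C^1(\R)$. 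Since $[\ln(\e+|u|)]^{-\eps}\le1$ and the bracket has absolute value $\le 2^*-1+\eps$, this gives \eqref{f':eps}. Subtracting $f'_0(u)=(2^*-1)|u|^{2^*-2}$, the difference equals $|u|^{2^*-2}$ times $(2^*-1)\bigl([\ln(\e+|u|)]^{-\eps}-1\bigr)-[\ln(\e+|u|)]^{-\eps}\frac{\eps|u|}{(\e+|u|)\ln(\e+|u|)}$; bounding the first summand by $(2^*-1)\eps\ln\ln(\e+|u|)$ (as in (i)) and the second by $\eps/\ln(\e+|u|)$ yields the second estimate of (ii).

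For (iii) the crux, and the real work, is the pointwise bound $|f''_\eps(w)|\le C|w|^{2^*-3}$ for $w\ne0$, uniformly for $\eps$ small: differentiating the formula for $f'_\eps$ once more, each term of $f''_\eps(w)$ is $|w|^{2^*-3}$ times a factor bounded independently of $w$ and of small $\eps$ (a product of a constant, of $\tfrac{|w|}{\e+|w|}\in[0,1)$ to some power, of $[\ln(\e+|w|)]^{-s}\le1$ with $s\ge0$, and possibly of $\eps$). If $N\le6$, then $2^*-3\ge0$ and $f'_\eps$ is locally Lipschitz on $\R$, so the mean value theorem along $[u,u+v]$ gives $|f'_\eps(u+v)-f'_\eps(u)|\le C(|u|+|v|)^{2^*-3}|v|\le C(|u|^{2^*-3}+|v|^{2^*-3})|v|$, which is the first case of \eqref{f'eps:N}. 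If $N>6$, then $2^*-3<0$ and $f''_\eps$ is singular at the origin — this is the main obstacle. I would then distinguish two cases. When $|v|\ge|u|/2$, one has $|u+v|\le3|v|$, so \eqref{f':eps} gives $|f'_\eps(u+v)|+|f'_\eps(u)|\le C|v|^{2^*-2}$. When $|v|<|u|/2$, every point of the segment $[u,u+v]$ has norm $\ge|u|/2>0$, so the mean value theorem applies there and, since $|\zeta|^{2^*-3}\le C|u|^{2^*-3}$ on that segment, $|f'_\eps(u+v)-f'_\eps(u)|\le C|u|^{2^*-3}|v|\le C|v|^{2^*-3}|v|=C|v|^{2^*-2}$, the last inequality because $|v|<|u|$ and $2^*-3<0$. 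In both cases $|f'_\eps(u+v)-f'_\eps(u)|\le C|v|^{2^*-2}\le C\bigl(|v|^{2^*-2}+\eps|u|^{2^*-2}\bigr)$, the second case of \eqref{f'eps:N}.

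The only genuinely delicate point I expect is (iii) for $N>6$, where $f''_\eps$ blows up at $0$ and the dichotomy $|v|\ge|u|/2$ versus $|v|<|u|/2$ is essential; parts (i), (ii), and (iii) for $N\le6$ reduce to routine bookkeeping with the explicit derivatives of $f_\eps$ and the elementary inequalities above.
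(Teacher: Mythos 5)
Your parts (i), (ii), and (iii) for $N\le 6$ match the paper's argument in substance: the paper also computes $f'_\eps$ and $f''_\eps$ explicitly, uses $[\ln(\e+|u|)]^{-\eps}\le 1$, $|u|/(\e+|u|)\le 1$, $\ln(\e+|u|)\ge1$, and for (i)--(ii) applies the mean value theorem in the variable $\eps$ (which is the integrated form of your direct subtraction — same content). Your description of the $|f''_\eps(w)|\le C|w|^{2^*-3}$ bound is slightly optimistic: the first term of $f''_\eps$, namely $\frac{\eps|u|^{2^*-3}u}{[\ln(\e+|u|)]^\eps}\cdot\frac{|u|-\e\ln(\e+|u|)}{(\e+|u|)^2(\ln(\e+|u|))^2}$, is not literally a product of $|w|^{2^*-3}$ and factors each bounded by $1$; one needs, e.g., $||u|-\e\ln(\e+|u|)|\le 2(\e+|u|)$ and $|u|/[(\e+|u|)\ln^2(\e+|u|)]\le1$ to close the estimate — but these are routine and the conclusion holds, as the paper confirms.

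For (iii) with $N>6$ you genuinely diverge from the paper, and in an interesting way. The paper avoids $f''_\eps$ entirely in that range (since $2^*-3<0$ makes it singular at the origin) and instead splits $|f'_\eps(u+v)-f'_\eps(u)|\le F_1+F_2$, where $F_2$ collects the $\eps$-weighted part (bounded by $C\eps(|u|^q+|v|^q)$ with $q=2^*-2$) and $F_1$ is the leading difference $p\bigl||u+v|^q[\ln(\e+|u+v|)]^{-\eps}-|u|^q[\ln(\e+|u|)]^{-\eps}\bigr|$, which they control by substituting $x=u/v$ and showing the resulting function $g$ is bounded by $1$; this yields $F_1\le p|v|^q$ and hence the stated $C(|v|^q+\eps|u|^q)$. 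Your route instead keeps $f''_\eps$ and dispatches the singularity at $0$ by the dichotomy $|v|\ge|u|/2$ (use $|f'_\eps(w)|\le C|w|^q$ on both endpoints) versus $|v|<|u|/2$ (the segment stays away from $0$, the mean value theorem applies, and $|u|^{2^*-3}|v|\le |v|^{q}$ because $2^*-3<0$ and $|v|<|u|$). This is sound, arguably more elementary than the scaling argument, and in fact delivers the sharper bound $|f'_\eps(u+v)-f'_\eps(u)|\le C|v|^{2^*-2}$ with no $\eps|u|^{2^*-2}$ term — the stated inequality then follows trivially. The trade-off is that the paper's proof never needs a uniform $f''_\eps$ bound near the origin for $N>6$, whereas yours relies on it only away from the origin, which your dichotomy guarantees. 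Both are correct; yours is the one I'd find easier to check.
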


\begin{proof}
$(i):$ Since
\begin{align*}
\frac{\partial f_\eps(u)}{\partial\eps}&=-\frac{|u|^{2^*-2}u}{[\ln(\e+|u|)]^\eps}\,\ln \ln(\e+|u|),
\end{align*}
we have that
\begin{align*}
\left|\frac{\partial f_\eps(u)}{\partial\eps}\right|&\leq |u|^{2^*-1}\ln \ln(\e+|u|),
\end{align*}
and the statement follows easily from the mean value theorem.

$(ii):$ As
\begin{align}\label{def:f':eps}
f'_\eps(u)&=\frac{|u|^{2^*-2}}{[\ln(\e+|u|)]^\eps}\left(2^*-1-\frac{\eps|u|}{(\e+|u|)\ln(\e+|u|)}\right),
\end{align}
then~\eqref{f':eps} holds
due to $\  0\le\frac{|u|}{\e+|u|}\le 1$, and  $\ 0\le\frac{1}{\ln(\e+|u|)}\le 1.$

Since~\eqref{def:f':eps}, we have that
\begin{align*}
\frac{\partial f'_\eps(u)}{\partial\eps}&=-\frac{|u|^{2^*-2}\ln \ln(\e+|u|)}{[\ln(\e+|u|)]^\eps}\left(2^*-1-\frac{\eps|u|}{(\e+|u|)\ln(\e+|u|)}\right)\\
&\quad - \frac{|u|^{2^*-1}}{(\e+|u|)[\ln(\e+|u|)]^{\eps +1}}.
\end{align*}
Hence, for $\eps$ small enough,
\begin{align*}
\left|\frac{\partial f'_\eps(u)}{\partial\eps}\right|&\leq        (2^*-1)     \frac{|u|^{2^*-2}\ln \ln(\e+|u|)}{[\ln(\e+|u|)]^\eps} + \frac{|u|^{2^*-2}}{[\ln(\e+|u|)]^{\eps +1}} \\
&\leq |u|^{2^*-2}\left(       (2^*-1)     \ln \ln(\e+|u|) + \frac{1}{\ln(\e+|u|)}\right).
\end{align*}
Now the claim follows from the mean value theorem.

$(iii):$ Setting $p:=2^*-1$, we see that
\begin{align*}
&f''_\eps(u)=\frac{\eps\,|u|^{2^*-3}u}{[\ln(\e+|u|)]^\eps}
\left(\frac{|u|-\e\ln(\e+|u|)}{(\e+|u|)^2(\ln(\e+|u|))^2}\right)\\
&+\frac{|u|^{2^*-4}u}{[\ln(\e+|u|)]^\eps}
\left(p-1-\frac{\eps|u|}{(\e+|u|)\ln(\e+|u|)}\right)
\left(p-\frac{\eps|u|}{(\e+|u|)\ln(\e+|u|)}\right).
\end{align*}
So, for $\eps$ small enough,
\begin{align*}
&|f''_\eps(u)|\le C|u|^{2^*-3}.
\end{align*}
Since $2^*-3\geq 0$ for $N\leq 6$, the mean value theorem yields
$$|f'_\eps(u+v)-f'_\eps(u)|=|f''_\eps(u+tv)||v|\leq C|u+tv|^{2^*-3}|v|\leq C(|u|^{2^*-3}+|v|^{2^*-3})|v|$$
for some $t\in(0,1)$, as stated in $(iii)$ for $N\leq 6$.

Next, assume $N>6$. Then, $q:=2^*-2\in(0,1)$. We write
\begin{align*}
&|f'_\eps(u+v)-f'_\eps(u)|       \le     p\left|\frac{|u+v|^q}{[\ln(\e+|u+v|)]^\eps}-\frac{|u|^q}{[\ln(\e+|u|)]^\eps}\right|\\
&\qquad + \eps\left|\frac{|u+v|^p}{(\e+|u+v|)[\ln(\e+|u+v|)]^{\eps+1}}-\frac{|u|^p}{(\e+|u|)[\ln(\e+|u|)]^{\eps+1}}\right|\\
&\quad=:F_1+F_2.
\end{align*}
Clearly, 
$$F_2\leq        C\eps(|u|^q+|v|^q).     
$$
Now, for any fixed $v\neq 0$, setting $x:=\frac{u}{v}$ we have that
\begin{align*}
&\frac{1}{|v|^q}\left|\frac{|u+v|^q}{[\ln(\e+|u+v|)]^\eps}-\frac{|u|^q}{[\ln(\e+|u|)]^\eps}\right|\\
&\qquad=\left|\frac{|x+1|^q}{[\ln(\e+|v||x+1|)]^\eps}-\frac{|x|^q}{[\ln(\e+|v||x|)]^\eps}\right|=:g(x).
\end{align*}
The function $g$ is symmetric with respect to $-\frac{1}{2}$, \textit{i.e.}, $g(x)=g(-x-1)$, it is increasing in $[-\frac{1}{2},0]$ and decreasing in $[0,\infty)$, and $g(0)\leq 1$. Hence,
$$F_1\leq p|v|^q.$$
This proves~\eqref{f'eps:N}, concluding the proof of statement $(iii)$. 
\end{proof}

\begin{lemma} \label{lem:A1}
Let $r>0$. Then, for any $u>0$ and $\delta\in(0,1)$,
$$\ln \ln \left(\e+\delta^{-r}u\right) = \ln\ln \left(\delta^{-r}\right) + \ln\left(1+\frac{\ln(\e^{1-r|\ln\delta|} +u)}{r|\ln\delta|}\right),$$
and
\begin{align}\label{shlim}
\lim_{\delta\to 0}\left(|\ln\delta|\,\ln\left[1+\frac{\ln(\e^{1-r|\ln\delta|} +u)}{r|\ln\delta|}\right]\right)=\frac{1}{r}\ln u.
\end{align}
\end{lemma}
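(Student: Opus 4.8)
The plan is to prove the displayed identity by a direct algebraic manipulation and then deduce \eqref{shlim} from it by elementary asymptotics; since the statement is purely elementary, I do not expect any serious obstacle.

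\emph{Step 1: the identity.} I would start from the observation that, since $\delta\in(0,1)$, one has $\delta^{-r}=\e^{r|\ln\delta|}$, and hence
\[
\e+\delta^{-r}u=\delta^{-r}\bigl(\delta^{r}\e+u\bigr)=\delta^{-r}\bigl(\e^{\,1-r|\ln\delta|}+u\bigr),
\]
so that $\ln(\e+\delta^{-r}u)=r|\ln\delta|+\ln(\e^{\,1-r|\ln\delta|}+u)$. Taking one further logarithm and factoring $r|\ln\delta|=\ln(\delta^{-r})$ out of the resulting expression gives
\[
\ln\ln(\e+\delta^{-r}u)=\ln\ln(\delta^{-r})+\ln\!\left(1+\frac{\ln(\e^{\,1-r|\ln\delta|}+u)}{r|\ln\delta|}\right),
\]
which is the claimed identity. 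I would also insert one line checking that all the logarithms are legitimate: since $\e^{\,1-r|\ln\delta|}+u\ge\e^{\,1-r|\ln\delta|}$ we get $r|\ln\delta|+\ln(\e^{\,1-r|\ln\delta|}+u)\ge 1>0$ (equivalently, this quantity is $\ln(\e+\delta^{-r}u)\ge 1$), while $r|\ln\delta|>0$ is clear.

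\emph{Step 2: the limit.} Here I would set $t:=|\ln\delta|$, which tends to $+\infty$ as $\delta\to0$, and $a(t):=\e^{\,1-rt}+u$. Because $\e^{\,1-rt}\to0$, we have $a(t)\to u>0$, hence $\ln a(t)\to\ln u$ and $x(t):=\ln a(t)/(rt)\to0$. I would then write
\[
t\,\ln\!\left(1+\frac{\ln a(t)}{rt}\right)=\frac{\ln a(t)}{r}\cdot\frac{\ln(1+x(t))}{x(t)},
\]
interpreting the last factor as $1$ whenever $x(t)=0$, consistently with the continuity of $y\mapsto\ln(1+y)/y$ at the origin; letting $\delta\to0$, the first factor converges to $\frac1r\ln u$ and the second to $1$, which gives \eqref{shlim}.

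The only points that need a moment of care — and which I would not expect to cause real difficulty — are the well-definedness of the nested logarithms in Step~1 and the degenerate case $x(t)=0$ in Step~2, which can occur only when $u<1$ and then only for isolated values of $\delta$; both are handled by the remarks above.
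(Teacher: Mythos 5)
Your proof of the identity in Step~1 is exactly the paper's: factor $\delta^{-r}$ inside the logarithm, take logs, and pull $\ln\delta^{-r}=r|\ln\delta|$ out of the outer logarithm. The remark on well-definedness (that $r|\ln\delta|+\ln(\e^{1-r|\ln\delta|}+u)=\ln(\e+\delta^{-r}u)\ge 1>0$) is a small and welcome addition not made explicit in the paper.

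For the limit in Step~2 your route differs from the paper's, though both are elementary. The paper substitutes $t=|\ln\delta|^{-1}$, sets $g(t)=\ln\bigl[1+\tfrac{t}{r}\ln(\e^{1-r/t}+u)\bigr]$, and evaluates $\lim_{t\to0^+}g(t)/t$ by L'H\^opital's rule, which requires differentiating $g$ and then passing to the limit in a quotient. You instead keep $t=|\ln\delta|\to\infty$ and factor the expression as $\tfrac{\ln a(t)}{r}\cdot\tfrac{\ln(1+x(t))}{x(t)}$ with $x(t)=\ln a(t)/(rt)\to0$, reducing the claim to the standard limit $\ln(1+x)/x\to1$; you correctly flag the only delicate point, namely that $x(t)$ may vanish (only when $u<1$, and then only at isolated $\delta$), and resolve it by the continuous extension of $\ln(1+x)/x$ at $x=0$. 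Your version avoids differentiation entirely and is, if anything, cleaner; the paper's L'H\^opital computation is slightly heavier but equivalent. Both are correct.
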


\begin{proof}
We have that
\begin{align*}
&\ln \ln \left(\e+\delta^{-r}u\right) = \ln\ln(\delta^{-r}(\delta^r\e + u))=\ln\left(\ln \delta^{-r}+\ln(\e^{1-r|\ln\delta|}+u)\right)\\
&=\ln\left[\ln \delta^{-r}\left(1+\frac{\ln(\e^{1-r|\ln\delta|}+u)}{\ln \delta^{-r}}\right)\right]\\
&= \ln\ln \left(\delta^{-r}\right) + \ln\left(1+\frac{\ln(\e^{1-r|\ln\delta|} +u)}{r|\ln\delta|}\right).
\end{align*}
Set
\begin{equation*}
g(t):=\ln\left[1+\frac{t}{r}\ln(\e^{1-\frac{r}{t}}+u)\right],\qquad t>0.
\end{equation*}
Applying L'Hôpital's rule we obtain
\begin{align*}
\lim_{t\to 0}\frac{g(t)}{t}&=\lim_{t\to 0}g'(t)=\lim_{t\to 0}\frac{\frac{1}{r}\ln(\e^{1-\frac{r}{t}}+u)+\frac1{t} \e^{1-\frac{r}{t}}(\e^{1-\frac{r}{t}}+u)^{-1}}{1+\frac{t}{r}\ln(\e^{1-\frac{r}{t}}+u)}\\
&=\frac{1}{r}\ln u.
\end{align*}
Taking $t:=|\ln\delta|^{-1}$, we obtain the claim.
\end{proof}

\bigskip

\begin{flushleft}
\textbf{Mónica Clapp and Alberto Saldaña}\\
Instituto de Matemáticas,\\
Universidad Nacional Autónoma de México,\\
Circuito Exterior, Ciudad Universitaria,\\
04510 Coyoacán, Ciudad de México, Mexico.\\
\texttt{monica.clapp@im.unam.mx,\; alberto.saldana@im.unam.mx} 
\medskip

\textbf{Rosa Pardo}\\
Departamento de Análisis Matemático y Matemática Aplicada,\\
Facultad de Ciencias Químicas,\\
Universidad Complutense de Madrid,\\
28040 Madrid, Spain.\\
\texttt{rpardo@ucm.es}
\medskip

\textbf{Angela Pistoia}\\
Dipartimento di Metodi e Modelli Matematici,\\
La Sapienza Università di Roma,\\
Via Antonio Scarpa 16,\\
00161 Roma, Italy.\\
\texttt{angela.pistoia@uniroma1.it} 
\end{flushleft}


\begin{thebibliography}{99}

\bibitem{ap} Atkinson, F. V.; Peletier, L. A.: Elliptic equations with nearly critical growth. J. Differential Equations 70 (1987), no. 3, 349–365.

\bibitem{bc} Bahri, A., Coron, J.M.: On a nonlinear elliptic equation involving the critical Sobolev exponent: the effect of the topology of the domain. Commun. Pure Appl.Math. 41 (1988), no. 3,253--294


\bibitem{blr} Bahri, A.; Li, Y.; Rey, O.: On a variational problem with lack of compactness: the topological effect of the critical points at infinity. Calc. Var. Partial Differential Equations 3 (1995), no. 1, 67–93. 

\bibitem{bmp} Bartsch, T.; Micheletti, A.M.; Pistoia, A.: On the existence and the profile of nodal solutions of elliptic equations involving critical growth. Calc. Var. Partial Differential Equations 26 (2006), no. 3, 265–282. 

\bibitem{cp} Castro, A.; Pardo, R.: A priori bounds for positive solutions of subcritical elliptic equations. Rev. Mat. Complut. 28 (2015), no. 3, 715–731.

\bibitem{cp-rl}
Castro, A.; Pardo, R.: {\it A priori} estimates for positive solutions to subcritical elliptic problems in a class of non-convex regions. Discrete Contin. Dyn. Syst. Ser. B 22 (2017, no. 3, 783--790.

\bibitem{dp} Damascelli, L.; Pardo, R.: A priori estimates for some elliptic equations involving the p-Laplacian. Nonlinear Anal. Real World Appl. 41 (2018), 475–496. 

\bibitem{dln} de Figueiredo, D. G.; Lions, P.-L.; Nussbaum, R. D.: A priori estimates and existence of positive solutions of semilinear elliptic equations. J. Math. Pures Appl. (9) 61 (1982), no. 1, 41–63.

\bibitem{fw} Flucher, M.; Wei, J.:
Semilinear Dirichlet problem with nearly critical exponent, asymptotic location of hot spots.  
Manuscripta Math. 94 (1997), no. 3, 337–346.

\bibitem{gs} Gidas, B.; Spruck, J.: A priori bounds for positive solutions of nonlinear elliptic equations. Comm. Partial Differential Equations 6 (1981), no. 8, 883–901.

\bibitem{g} Grossi, M.: On the nondegeneracy of the critical points of the Robin function in symmetric domains. C. R. Math. Acad. Sci. Paris 335 (2002), no. 2, 157–160. 

\bibitem{h} Han, Z.: Asymptotic approach to singular solutions for nonlinear elliptic equations involving critical Sobolev exponent. Ann. Inst. H. Poincaré Anal. Non Linéaire 8 (1991), no. 2, 159–174. 

\bibitem{mp2017} Mavinga, N. ; Pardo, R.:
A priori bounds and existence of positive solutions for subcritical semilinear elliptic systems.
J. Math. Anal. Appl., 449, (2017), no. 2,  1172--1188.

\bibitem{mp} Micheletti, A.M.; Pistoia, A.: Non degeneracy of critical points of the Robin function with respect to deformations of the domain. Potential Anal. 40 (2014), no. 2, 103–116.

\bibitem{mp_2002} Musso, M., Pistoia, A.: Multispike solutions for a nonlinear elliptic problem involving critical Sobolev exponent. Indiana Univ. Math. J. 5,  (2002) 541–579.

\bibitem{mup} Musso, M.; Pistoia, A.: Tower of bubbles for almost critical problems in general domains. J. Math. Pures Appl. (9) 93 (2010), no. 1, 1–40.

\bibitem{pt} Pistoia, A.; Tavares, H.: Spiked solutions for Schrödinger systems with Sobolev critical exponent: the cases of competitive and weakly cooperative interactions. J. Fixed Point Theory Appl. 19 (2017), no. 1, 407–446.

\bibitem{pw} Pistoia, A.; Weth, T.:
Sign changing bubble tower solutions in a slightly subcritical semilinear Dirichlet problem. 
Ann. Inst. H. Poincaré Anal. Non Linéaire 24 (2007), no. 2, 325–340.

\bibitem{p} Pistoia, A.: The Ljapunov-Schmidt reduction for some critical problems. Concentration analysis and applications to PDE, 69–83. Trends Math., Birkhäuser/Springer, Basel, 2013.

\bibitem{po} Pohozaev, S.I.: On the eigenfunctions of the equation $\Delta u+\lambda f(u)=0$ Dokl. Akad. Nauk. SSSR 165, 36--39 (1965)



\bibitem{r} Rey, O.: The role of the Green's function in a nonlinear elliptic equation involving the critical Sobolev exponent. J. Funct. Anal. 89 (1990), no. 1, 1–52. 

\end{thebibliography}
\end{document}